\newtheorem{thm}{Theorem}[section]
\newtheorem{lema}[thm]{Lemma}
\newtheorem{rem}[thm]{Remark}
\newtheorem{prop}[thm]{Proposition}
\newtheorem{ex}[thm]{Example}
\newtheorem{question}[thm]{Question}
\newcommand{\N}{{\mathbb N}}
\newcommand{\dom}{\text{\sf dom}}
\newcommand{\im}{\text{\sf im}}
\newcommand{\ev}{\text{\sf ev}}
\newcommand{\cantor}{2^{\N}}
\def\base{\mathcal{B}}
\def\hco{\tau_{hco}}
\def\DD{{\sf D}}
\def\II{{\sf I}}
\title{On the Polishness of the inverse semigroup $\Gamma(X)$ on a compact metric space $X$.}
\author{Jerson P\'erez}
\address{Escuela de Matem\'aticas, Universidad Industrial de Santander, C.P. 680001, Bucaramanga - Colombia.}
\email{jersonenrique\_64@hotmail.com}
\author{Carlos Uzc\'{a}tegui}
\address{Escuela de Matem\'aticas, Universidad Industrial de Santander, C.P. 680001, Bucaramanga - Colombia.}
\email{cuzcatea@saber.uis.edu.co}
\date{}
\subjclass{Primary 54H15, 03E15; Secondary 20M18}
\keywords{Inverse topological semigroup, Munn semigroups,  Polish semigroup, partial homeomorphism}
\begin{document}

\maketitle
\begin{abstract}
 Let $\Gamma(X)$ be the inverse semigroup of partial homeomorphisms between open subsets of a compact metric space $X$. There is a  topology, denoted $\tau_{hco}$, that makes $\Gamma(X)$ a topological inverse semigroup. We address the question of whether $\tau_{hco}$ is Polish. For a 0-dimensional compact metric space $X$, we prove that $(\Gamma(X), \tau_{hco})$  is Polish by showing that it is topologically isomorphic to a closed subsemigroup of the Polish symmetric inverse semigroup $I(\N)$. We present examples, similar to the classical Munn semigroups,  of Polish inverse semigroups consisting of partial isomorphism on lattices of open sets. 
\end{abstract}

\section{Introduction}

An inverse semigroup is a semigroup $S$ where for each $s\in S$  there is a unique $t\in S$ such that $sts=s$ and $tst=t$. 
The collection $\Gamma(X)$  of all homeomorphisms between open subsets of a topological space $X$ is the archetype of all inverse semigroups (see the introduction of Lawson's book \cite{Law}). When $X$ has the discrete topology we get the symmetric inverse semigroup $I(X)$ of all bijections between arbitrary subsets of $X$ (see, for instance, \cite[page 6]{Law}). This semigroup plays an universal role among inverse semigroups as the symmetric group does for groups. Namely, a classical theorem of Wagner and Preston says that every inverse semigroup is isomorphic to a subsemigroup of $I(X)$ for some set $X$.

Recently, $\Gamma(X)$  has been  endowed with a Hausdorff inverse semigroup topology $\tau_{hco}$ when $X$ is a locally compact Hausdorff space \cite{MPU}.  The topology $\tau_{hco}$ is a natural generalization of the classical compact-open topology on the group of homeomorphisms $H(X)$. When $X$ is discrete, this topology $\tau_{hco}$ was independently defined in   \cite{elliott2020} and \cite{PerezUzca2022} and will be denoted by $\tau_{pp}$.  An important particular case is when $X$ is countable, since  $(I(\N), \tau_{pp})$ is a Polish  (i.e. completely metrizable and separable) inverse semigroup. One of the main results of this paper is  that $(\Gamma(X), \tau_{hco})$ is Polish for $X$ a 0-dimensional compact metric space.  The key fact is that $\Gamma(X)$ turns out to be  topologically isomorphic to a  closed inverse subsemigroup of $(I(\N),\tau_{pp})$, as we explain below.

Motivated by the  classical Munn semigroups,  we study semigroups of partial lattice isomorphisms. 
Let $\base$ be a countable base of open subsets of a compact metric space $X$, which is assumed to be  closed under finite unions and intersections and contains $\emptyset$. Let $S(\base)$ be the inverse subsemigroup of $I(\base)$ consisting of all order isomorphisms (with respect to $\subseteq$)  between hereditary sublattices of $\base$. We prove that $S(\base)$ is a closed inverse subsemigroup of $(I(\base),\tau_{pp})$, and thus it is Polish.  

When $X$ is  a 0-dimensional compact metric space,   we show that $(\Gamma(X), \tau_{hco})$ is topologically isomorphic to  $(S(\base),\tau_{pp})$,  where $\base$ is any base of $X$ consisting of clopen sets. Thus, in this case,  $\Gamma(X)$ is  Polish. We do not know if the same holds for any compact metric space. This result can be regarded as an inverse semigroup version of the following known result about the group of homeomorphisms $H(X)$. Let $X$ be a compact 0-dimensional metric space.  Then $H(X)$ (with the compact-open topology)  is topologically isomorphic to a closed subgroup of the symmetric group $S_\infty(\N)$ of all permutation of $\N$ with the product topology (see \cite[Thm 1.5.1]{BeckerKechris1996}).

\section{Preliminaries}

A topological space is Polish if it is completely metrizable and separable. We refer to \cite{kechris1995} as a general reference for the descriptive set theory of Polish spaces.   
$\N$ denotes the non negative integer.  For any countable set $M$, we identify a subset $A\subseteq M$ with its characteristic function and thus a collection $\mathcal{C}$ of subsets of $M$ is seen as a subset of $\{0,1\}^M$ (which is homeormorphic to the Cantor space $\cantor$) and we can talk about closed, open, $F_\sigma$, $G_\delta$ etc. collections of subsets of $M$.

A {\em semigroup} is a non-empty set $S$ together with an associative binary operation. 
A semigroup  $S$ is  {\em regular},  if for all $s\in S$, there is $t\in S$ such that $st s=s$ and $tst=t$. In this case, $t$ is called  an inverse of $s$. If each element have a unique inverse,  $S$ is an {\em inverse} semigroup and, in this case, $s^{-1}$ denotes  the inverse of $s$. An element $s$ of a semigroup is {\em idempotent} if $ss=s$. We denote by $E(S)$ the collection of idempotents of $S$. We use \cite{Howie, Law} as general references for semigroup theory

Let $\tau$ be a topology  on a semigroup $S$.  If the multiplication $S\times S\to S$ is continuous, then  $S$ is called a {\em topological semigroup}.  An inverse semigroup $S$ is called topological, if it is a topological semigroup and the function $i:S\to S$, $s\to s^{-1}$ is continuous.

The symmetric inverse semigroup on a set $X$ is defined as follows:
\[
I(X) = \{f : A \rightarrow B |\;A, B \subseteq X\; \mbox{and $f$ is bijective}\}.
\]
For $f : A \rightarrow B$ in $I(X)$ we denote $A = \dom(f)$  and $B = \im(f)$. The operation on $I(X)$ is the usual composition, namely, given $f, g \in I(X)$, then $f \circ g$ is defined by letting $\dom(f \circ g)=g^{-1}(\dom(f) \cap \im(g))$ and $(f \circ g)(x) = f(g(x))$, if $x \in \dom(f \circ g)$. The idempotents  of $I(X)$ are the partial identities $1_{A} : A \rightarrow A, 1_{A}(x) = x$ for all $x \in A$ and $A \subseteq X$.
Notice that $1_{\emptyset}$ is the empty function which also belongs to $I(X)$.
$S_{\infty}(X)$ denotes  the symmetric group, that is, the collection of all bijections from $X$ to $X$.  To avoid a possible confusion we denote by $f[U]$ the direct image of $U$ under $f$ for $U\subseteq \dom(f)$.

The following functions play an analogous role in $I(X)$ as the projection functions do in  the product space $X^X$. Let $D_{x} = \{f\in I(X):\; x\in 
\dom(f)\}$ and $2^{X}$ denotes the power set of $X$.
\[
\begin{array}{lll}
\dom : I(X) \rightarrow 2^{X}, &f \mapsto  \dom(f),\\
\im : I(X) \rightarrow 2^{X}, & f \mapsto  \im(f),\\
\ev_{x} : D_x \rightarrow X, & f \mapsto  f(x),\;\mbox{for $x \in X$}.
\end{array}
\]
For $x, y \in X$, let
\[
\begin{array}{lcl}
v(x, y) & = &  \{f \in I(X)\; |\; x \in \dom(f) \;\mbox{and}\; f(x) = y\},\\
w_{1}(x) & = & \{f \in I(X)\; |\; x \notin \dom(f)\},\\
w_{2}(y) & = & \{f \in I(X)\; |\; y \notin \im(f)\}.
\end{array}
\]
The topology generated by  the sets $v(x,y)$, $w_1(x)$ and $w_2(x)$ is denoted by $\tau_{pp}$, it was defined in \cite{elliott2020,PerezUzca2022}. The topology $\tau_{pp}$ is the smallest Hausdorff inverse semigroup topology that makes continuous the functions $\dom$, $\im$ and $\ev_x$ (for $x\in X$), where $2^X$ is given the product topology.  

A special case for our purposes is  $I(\N)$.  It is known that  $(I(\N), \tau_{pp})$ is a Polish inverse semigroup \cite{elliott2020,PerezUzca2022}. Clearly, we can use any countable set $\base$ in place of $\N$ and work in the space $I(\base)$. Convergence in $I(\N)$ is as follows.

\begin{prop}(\cite{PerezUzca2022})
\label{conv}
Let $(f_n)_n$  be a sequence in $I(\N)$ and $f\in I(\N)$. Then, $f_n\to f$ if and only if  the following conditions hold.

\begin{itemize}
\item[(i)] For all $x\in \dom(f)$ there is $n_{0}\in \N$ such that  $x\in \dom(f_n)$  and $f_n(x)=f(x)$ for all  $n\geq n_0$.

\item[(ii)]  For all $x\not\in \dom(f)$ there is $n_0\in\N$ such that $x\not\in \dom(f_n)$  for all $n\geq n_0$.
\end{itemize}
\end{prop}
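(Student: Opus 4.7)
The plan is to translate convergence in $\tau_{pp}$ directly into pointwise data by unwinding the subbasis $\{v(x,y),\, w_1(x),\, w_2(y) : x,y\in\N\}$ of $\tau_{pp}$. Both implications then reduce to carefully matching subbasic neighborhoods of $f$ against the two numbered conditions.

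For the forward direction, I assume $f_n\to f$. If $x\in\dom(f)$ and $y=f(x)$, then $v(x,y)$ is a subbasic open neighborhood of $f$, so there exists $n_0$ with $f_n\in v(x,y)$ for all $n\geq n_0$; unpacking the definition of $v(x,y)$ gives exactly condition (i). Dually, if $x\notin\dom(f)$, then $w_1(x)$ is a subbasic open neighborhood of $f$, so eventually $f_n\in w_1(x)$, which is condition (ii).

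For the backward direction, I assume (i) and (ii) and verify that $f_n$ lies eventually in every basic open neighborhood of $f$. Every such neighborhood is a finite intersection of subbasic sets of the form $v(x_j, f(x_j))$ with $x_j\in\dom(f)$, $w_1(a_i)$ with $a_i\notin\dom(f)$, and $w_2(b_k)$ with $b_k\notin\im(f)$. Finitely many applications of (i) place $f_n$ eventually inside each $v(x_j, f(x_j))$, and finitely many applications of (ii) place $f_n$ eventually inside each $w_1(a_i)$; taking the maximum of all the resulting thresholds yields a single $n_0$ that works for these factors.

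The main obstacle I foresee is the $w_2(b_k)$ factors, where I need $b_k\notin\im(f_n)$ for large $n$ whenever $b_k\notin\im(f)$. This image-side information has to be extracted from the stated domain-side conditions. The cleanest route I plan to take is to exploit that $\tau_{pp}$ is an \emph{inverse} semigroup topology, so the inversion map is continuous and $f_n\to f$ is equivalent to $f_n^{-1}\to f^{-1}$; then the symmetric form of (ii) applied to the inverse sequence supplies exactly the required eventual avoidance of $b_k$ by $\im(f_n)$. Combining all thresholds completes the verification.
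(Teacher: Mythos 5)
The paper gives no proof of this proposition—it is quoted from the cited source—so there is nothing to compare your argument against except the statement itself. Your forward direction is correct, and in the backward direction the $v(x_j,f(x_j))$ and $w_1(a_i)$ factors are handled exactly as they should be. The gap sits precisely where you anticipated it, and your proposed repair does not close it: invoking continuity of inversion to pass to $f_n^{-1}\to f^{-1}$ presupposes $f_n\to f$, which is what you are proving, and ``the symmetric form of (ii) applied to the inverse sequence''—namely, for all $y\notin\im(f)$ eventually $y\notin\im(f_n)$—is not among the hypotheses and cannot be derived from (i) and (ii).

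In fact it genuinely fails. Take $f=1_{\emptyset}$ and let $f_n$ be the bijection from $\{n\}$ onto $\{0\}$. Condition (i) is vacuous and condition (ii) holds (given $x$, we have $x\notin\dom(f_n)=\{n\}$ for all $n>x$), yet $f\in w_2(0)$ while $f_n\notin w_2(0)$ for every $n$, so $f_n\not\to f$. Thus the backward implication, with (i) and (ii) as the only hypotheses, is false as literally stated: the characterization in the cited source also carries the image-side clause ``for all $y\notin\im(f)$ there is $n_0$ such that $y\notin\im(f_n)$ for all $n\geq n_0$'' (the image analogue of (i) does follow from (i), so only this negative clause is needed). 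Once that clause is added to the hypotheses, your treatment of the $w_2(b_k)$ factors becomes a direct application of it, no appeal to inversion is required, and the rest of your argument is complete.
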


Let $X$ be a locally compact Hausdorff space.  We denote by  $\Gamma(X)$ the collection of all homeomorphism $f:U\to V$  where  $U$ and $V$ are  open subsets of  $X$.   Then $\Gamma(X)$ is an inverse subsemigroup of  $I(X)$. 

We recall the basic open sets in $\Gamma(X)$. For each compact set $K$ and open set in $X$ we let 
\[
    \langle K;V\rangle=\{f\in \Gamma(X): K\subseteq\dom(f) \;\&\; f[K]\subseteq V\}.
\]
\[
\langle K;V\rangle^{-1}=\{f\in \Gamma(X): K\subseteq f[V] \}.
\]
Now we define the Fell topology on  the collection $CL(X)$ of closed subsets of $X$ (including the emptyset). For each open set $V\subseteq X$, consider the following subsets of $CL(X)$:
\[
V^-=\{A\in CL(X):\; A\cap V\neq \emptyset\}
\]
and
\[
V^+=\{A\in CL(X):\; A\subseteq V\}.
\]
The {\em Fell} topology on $CL(X)$ has as a subbasis  the sets $V^-$ for $V$ open and $W^+$ for $W$ open with compact complement.
It is known that $CL(X)$ with the Fell topology is a compact Hausdorff space (see, for instance, \cite[Theorem 5.3.1]{Beer1993}).

Let $\tau_{co}$ be the topology generated by the sets $\langle K;V\rangle$ and $\langle K;V\rangle^{-1}$ for all compact $K$ and open $V$. Then $(\Gamma(X), \tau_{co})$ is a topological inverse semigroup. However, $\tau_{co}$ is not Hausdorff.  To remedy this,  consider the functions ${\sf D}, {\sf I}:\Gamma(X)\to CL(X)$ given by ${\sf D}(f)=X\setminus \dom(f)$ and ${\sf I}(f)=X\setminus \im(f)$.  
Let  $\hco$ be the  smallest topology extending $\tau_{co}$ and such that  $\sf D$ and $\sf I$ are continuous when $CL(X)$ is endowed with the Fell topology. Notice that $\II(f)=\DD(f^{-1})$ and ${\DD}^{-1}(V^+)=\langle V^c; X\rangle$. From this it follows that  the new open sets added to $\tau_{co}$ are   $\DD^{-1}(V^-)$ for $V\subseteq X$ open. 
 
\begin{thm} (\cite[Thm 3.11]{MPU}). 
Let $X$ be  a locally compact Hausdorff space. Then,  $(\Gamma(X),\tau_{hco})$ is a Hausdorff topological inverse semigroup  
\end{thm}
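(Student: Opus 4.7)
The plan is to establish, in turn, Hausdorffness, continuity of inversion $i$, and continuity of multiplication $m$.

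For Hausdorffness, take distinct $f,g\in\Gamma(X)$. If $\dom(f)\neq\dom(g)$ or $\im(f)\neq\im(g)$, then $\DD(f)\neq\DD(g)$ or $\II(f)\neq\II(g)$ in $CL(X)$; since the Fell topology is Hausdorff, pulling back disjoint Fell-open sets via $\DD$ (resp.\ $\II$) separates $f$ and $g$. Otherwise $f(x)\neq g(x)$ for some $x\in\dom(f)=\dom(g)$; Hausdorffness of $X$ yields disjoint open $V_1\ni f(x)$ and $V_2\ni g(x)$, and then $\langle\{x\};V_1\rangle$ and $\langle\{x\};V_2\rangle$ are disjoint $\tau_{co}$-open sets separating $f$ and $g$. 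For continuity of $i$, the identities $\DD\circ i=\II$, $\II\circ i=\DD$, together with $i^{-1}(\langle K;V\rangle)=\langle K;V\rangle^{-1}$, show that $i$ permutes the generating subbasis of $\tau_{hco}$ and is therefore continuous.

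For continuity of $m$, one checks that $m^{-1}$ of each subbasic $\tau_{hco}$-open set is open in $\tau_{hco}\times\tau_{hco}$. The compact-open sets $\langle K;V\rangle$ are handled by the classical argument: given $fg\in\langle K;V\rangle$, local compactness of $X$ produces an open $U$ with $g[K]\subseteq U\subseteq\overline{U}\subseteq\dom(f)\cap f^{-1}(V)$ and $\overline{U}$ compact, so that $\langle\overline{U};V\rangle\times\langle K;U\rangle$ is a neighborhood of $(f,g)$ mapping into $\langle K;V\rangle$; the case $\langle K;V\rangle^{-1}$ reduces to this via $i$. Fell-continuity of $(f,g)\mapsto\II(fg)$ reduces, via $\II(fg)=\DD((fg)^{-1})$ and continuity of $i$, to Fell-continuity of $(f,g)\mapsto\DD(fg)$. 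The upper-semicontinuity half of the latter (preimage of $W^+$ for $W$ open with compact complement) is again a compact-open argument, since $\DD(fg)\subseteq W$ iff the compact set $X\setminus W$ lies in $\dom(fg)$.

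The main obstacle is lower semicontinuity of $(f,g)\mapsto\DD(fg)$: given open $V$ with $V\cap\DD(fg)\neq\emptyset$, we must produce a neighborhood of $(f,g)$ where the same holds. Fix $x_0\in V\setminus\dom(fg)$. If $x_0\notin\dom(g)$, then $g\in\DD^{-1}(V^-)$ and the neighborhood $\Gamma(X)\times\DD^{-1}(V^-)$ works, since $\dom(f'g')\subseteq\dom(g')$ for all $f'$. The delicate case is $x_0\in\dom(g)$ with $g(x_0)\in\DD(f)$, where a direct pointwise attempt fails because $\DD^{-1}(W^-)$ only forces $\DD(f')$ to meet $W$ \emph{somewhere}. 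The remedy is to exploit that $g$ is a partial homeomorphism, so $g[V\cap\dom(g)]$ is open and contains $g(x_0)$; by local compactness choose an open $W$ with $g(x_0)\in W\subseteq\overline{W}\subseteq g[V\cap\dom(g)]$ and $\overline{W}$ compact. Then $f\in\DD^{-1}(W^-)$ and $g\in\langle\overline{W};V\rangle^{-1}$, and in the product neighborhood $\DD^{-1}(W^-)\times\langle\overline{W};V\rangle^{-1}$ any $(f',g')$ admits some $y\in W\cap\DD(f')$; since $y\in\overline{W}\subseteq g'[V]$, there exists $x\in V\cap\dom(g')$ with $g'(x)=y\notin\dom(f')$, so $x\in V\setminus\dom(f'g')$, as required.
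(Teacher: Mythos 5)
The statement is quoted from \cite[Thm 3.11]{MPU} and the paper supplies no proof of its own, so there is no internal argument to compare against; judged on its own terms, your proof is correct and complete. You verify Hausdorffness, continuity of inversion, and continuity of multiplication directly on the subbasis of $\tau_{hco}$, and you correctly isolate the one genuinely delicate point, namely the lower Fell-semicontinuity of $(f,g)\mapsto \DD(fg)$: the case $x_0\in\dom(g)$ with $g(x_0)\notin\dom(f)$ cannot be handled by a naive pointwise neighborhood, and your choice $\DD^{-1}(W^-)\times\langle\overline{W};V\rangle^{-1}$ with $g(x_0)\in W\subseteq\overline{W}\subseteq g[V\cap\dom(g)]$ and $\overline{W}$ compact does exactly the right thing, since any $y\in W\cap\DD(f')$ is pulled back into $V$ by any $g'$ covering $\overline{W}$ from $V$. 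The remaining steps (the classical compact-open argument for $\langle K;V\rangle$ using an interpolating relatively compact open set, the reduction of the $\langle K;V\rangle^{-1}$ and $\II$ cases via inversion, and the identification of the upper-semicontinuity half with $m^{-1}(\langle W^c;X\rangle)$) are standard and correctly executed.
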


\begin{thm}
Let $X$ be a compact metric space. Then $(\Gamma(X), \hco)$ is a separable  metrizable space. 
\end{thm}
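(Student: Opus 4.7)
The plan is to realize $(\Gamma(X), \hco)$ as a topological subspace of a compact metrizable space, from which both separability and metrizability will follow. Since $X$ is compact metric, the hyperspace $CL(X)$ endowed with the Fell topology is compact metrizable (it agrees with the Hausdorff metric topology in this setting), and the same holds for $CL(X\times X)$.

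For each $f\in\Gamma(X)$ I introduce the extended graph
\[
\Theta(f) := \Gamma_f \cup (\DD(f)\times X) \ \subseteq\ X\times X,
\]
where $\Gamma_f = \{(x,f(x)) : x\in\dom(f)\}$. A short argument using continuity of $f$ on $\dom(f)$ shows $\overline{\Gamma_f}\setminus \Gamma_f \subseteq \DD(f)\times X$, so $\Theta(f)$ is closed in $X\times X$. I then consider the map
\[
\Xi(f) := \bigl(\DD(f),\, \II(f),\, \Theta(f),\, \Theta(f^{-1})\bigr)\ \in\ CL(X)^2 \times CL(X\times X)^2.
\]
Injectivity is immediate: $\Theta(f)$ determines $\DD(f) = \{x\in X : \{x\}\times X \subseteq \Theta(f)\}$ (for $|X|>1$), and then $\Gamma_f = \Theta(f)\setminus (\DD(f)\times X)$ determines $f$.

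The crux is to verify that $\Xi$ is a topological embedding, i.e.\ that $\hco$ coincides with the initial topology it induces. Continuity of $\DD$, $\II$ and of inversion comes from the definition of $\hco$ together with the preceding theorem, so the work reduces to analysing $\Theta$. For a subbasic Fell open $(U_1\times U_2)^-$ one checks
\[
\Theta^{-1}\bigl((U_1\times U_2)^-\bigr) \ =\ \DD^{-1}(U_1^-) \ \cup\ \bigcup_{x\in U_1} \langle\{x\}; U_2\rangle,
\]
which is visibly $\hco$-open. The delicate step is to show $\Theta^{-1}(W^+)$ is $\hco$-open for every open $W\subseteq X\times X$: setting $K = \pi_1(W^c)$, which is compact, this set equals $\DD^{-1}((X\setminus K)^+) \cap \{f : \mathrm{graph}(f|_K)\subseteq W\}$, and the graph-in-$W$ condition can be rewritten as a finite intersection of subbasic opens $\langle L_j; V_j\rangle$ by covering the compact graph of $f|_K$ by finitely many open rectangles $U^i_1\times U^i_2\subseteq W$ and then shrinking to compact $L_j\subseteq U^{i(x_j)}_1$ with $f[L_j]\subseteq U^{i(x_j)}_2$ (using that $X$ is compact metric, so basic neighbourhoods with compact closures are available). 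Conversely, every $\hco$-subbasic open is an $\Xi$-preimage of an open in the target: $\langle K;V\rangle = \Theta^{-1}(W^+)$ for $W = (X\setminus K)\times X \cup X\times V$ (with $V\ne X$; the case $V=X$ is just $\DD^{-1}((X\setminus K)^+)$), $\langle K;V\rangle^{-1}$ is handled symmetrically via the fourth coordinate $\Theta\circ i$, and $\DD^{-1}(V^-), \II^{-1}(V^-)$ are immediate.

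With $\Xi$ established as a topological embedding into a compact metrizable space, $(\Gamma(X),\hco)$ is separable and metrizable as a subspace of a separable metric space. I expect the principal obstacle to be the careful verification that $\Theta^{-1}(W^+)$ is $\hco$-open, which combines compactness of the graph of $f|_K$, a finite-cover argument, and the local regularity available because $X$ is compact metric.
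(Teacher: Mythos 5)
Your proposal is correct, but it takes a genuinely different route from the paper. The paper's proof is short and indirect: it shows that $(\Gamma(X),\hco)$ is second countable (by exhibiting an explicit countable basis $\{\langle \overline{U};V\rangle,\ \langle \overline{U};V\rangle^{-1},\ \DD^{-1}(V^-)\ :\ U,V\in\base\}$ built from a countable base $\base$ closed under finite unions) and regular (by a direct argument producing, around each $f\in\langle K;V\rangle$, an open $M=\langle K;W\rangle\cap\langle\overline{U};X\rangle$ with $\overline{M}\subseteq\langle K;\overline{W}\rangle$), and then invokes Urysohn's metrization theorem. You instead embed $(\Gamma(X),\hco)$ topologically into the compact metrizable space $CL(X)^2\times CL(X\times X)^2$ via the extended-graph map $\Xi$, and read off separability and metrizability hereditarily. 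The computations you sketch all check out: $\Theta(f)$ is indeed closed, the identities $\Theta^{-1}((U_1\times U_2)^-)=\DD^{-1}(U_1^-)\cup\bigcup_{x\in U_1}\langle\{x\};U_2\rangle$ and $\langle K;V\rangle=\Theta^{-1}(W^+)$ with $W=((X\setminus K)\times X)\cup(X\times V)$ are right (modulo the degenerate cases you already flag), and the finite-cover/shrinking argument for $\Theta^{-1}(W^+)$ is exactly the compactness argument needed, with $\DD^{-1}((X\setminus K)^+)=\langle K;X\rangle$ supplying the domain condition. The trade-off: the paper's route is more elementary and yields second countability as an explicit intermediate fact, while yours is more structural --- it produces a concrete compatible metric (restriction of a Hausdorff-type metric on the hyperspaces) and a compactification of $\Gamma(X)$, and it reduces the paper's open Polishness question to computing the Borel/$G_\delta$ complexity of the image of $\Xi$, which is a potentially useful reformulation. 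Both arguments are complete in outline; yours requires somewhat more verification but buys more.
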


The proof follows from the next two facts and the Urysohn's metrization theorem.

\begin{lema}
Let $X$ be a compact metric space. Then $(\Gamma(X),\hco)$ is second countable. 
\end{lema}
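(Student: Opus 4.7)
The plan is to exhibit a countable subbasis for $\hco$. Since $X$ is compact metric, it is second countable; fix a countable base $\base$ for $X$ that is closed under finite unions. Because $X$ is compact, $\overline{W}$ is compact for every $W\in\base$, so $\{\overline{W}:W\in\base\}$ is a countable family of compact subsets of $X$.

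The main step is to verify that
\[
\mathcal{S}_0 = \{\langle \overline{W}; V\rangle : W,V\in\base\}\cup\{\langle \overline{W}; V\rangle^{-1} : W,V\in\base\}
\]
is a subbasis for $\tau_{co}$ on $\Gamma(X)$. Given $f\in\langle K;V\rangle$ with $K$ compact and $V$ open, I would produce a finite intersection of elements of $\mathcal{S}_0$ containing $f$ and contained in $\langle K;V\rangle$. For each $x\in K$ we have $x\in \dom(f)\cap f^{-1}(V)$, which is open; using that $X$ is regular, choose $W_x\in\base$ with $x\in W_x$ and $\overline{W_x}\subseteq \dom(f)\cap f^{-1}(V)$. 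Finitely many $W_{x_1},\ldots,W_{x_n}$ cover $K$. Each $f[\overline{W_{x_i}}]$ is compact and contained in $V$, so it is covered by finitely many basic opens lying inside $V$, and since $\base$ is closed under finite unions, their union gives a $V_i\in\base$ with $f[\overline{W_{x_i}}]\subseteq V_i\subseteq V$. Then $f\in\bigcap_{i=1}^n\langle\overline{W_{x_i}};V_i\rangle\subseteq\langle K;V\rangle$, as required. The analogous argument applied to $f^{-1}$ handles sets of the form $\langle K;V\rangle^{-1}$.

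Next I would handle the additional open sets that distinguish $\hco$ from $\tau_{co}$. As observed in the excerpt, $\DD^{-1}(W^+)$ for $W$ open with compact complement is already of the form $\langle X\setminus W;X\rangle\in\tau_{co}$, so the only new sets are $\DD^{-1}(V^-)$ and $\II^{-1}(V^-)$ for $V\subseteq X$ open. Writing $V=\bigcup_j V_j$ with $V_j\in\base$, one has $\DD^{-1}(V^-)=\bigcup_j \DD^{-1}(V_j^-)$ and similarly for $\II$, so adjoining the countable collection $\{\DD^{-1}(V^-),\II^{-1}(V^-):V\in\base\}$ to $\mathcal{S}_0$ yields a countable subbasis for $\hco$, whence $(\Gamma(X),\hco)$ is second countable.

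The main obstacle lies in the first step: approximating arbitrary basic open sets $\langle K;V\rangle$ from below by finite intersections over the countable family indexed by $\base$. This hinges on the compactness of $K$ and the regularity of the compact metric space $X$, used to shrink a neighborhood of each point of $K$ into a basic open whose closure remains inside $\dom(f)\cap f^{-1}(V)$, together with the mild closure properties imposed on $\base$.
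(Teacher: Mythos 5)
Your proof is correct and follows essentially the same route as the paper, which simply exhibits the countable family $\{\langle\overline{U};V\rangle,\ \langle\overline{U};V\rangle^{-1}: U,V\in\base\}\cup\{\DD^{-1}(V^-):V\in\base\}$ as a basis without further justification. You supply the verification (shrinking neighborhoods of points of $K$ via regularity and compactness, and reducing $\DD^{-1}(V^-)$ to basic $V_j$), which is exactly the argument the paper leaves implicit.
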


\begin{proof}
Since $X$ is compact metric, we can fix a countable basis  $\base$ for $X$ closed under finite unions. Then the following is a basis for $\hco$:
\[
\{\langle \overline{U}; V\rangle:\,U,V\in \base\}\,\cup\, \{\langle \overline{U}; V\rangle^{-1}:\,U,V\in \base\}\,\cup\,\{\DD(V^-): \,V\in \base\}.
\]
\end{proof}

\begin{lema}
Let $X$ be a compact metric space. Then $(\Gamma(X),\hco)$ is regular. 
\end{lema}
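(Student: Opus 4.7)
The plan is to establish regularity by verifying, for each $f\in \Gamma(X)$ and each basic $\hco$-neighborhood $W$ of $f$, the existence of an open $W'$ with $f\in W'\subseteq \overline{W'}\subseteq W$. Since the collection exhibited in the previous lemma is a \emph{basis}, it suffices to treat three types: $\langle \overline{U};V\rangle$, $\langle \overline{U};V\rangle^{-1}$, and $\DD^{-1}(V^-)$ for $U,V\in\base$. The second type will reduce to the first by applying the argument to $f^{-1}$ and pushing forward via the continuous involution $(\cdot)^{-1}$.

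For $W=\langle K;V\rangle$ with $K=\overline{U}$ compact, I would use normality of the compact metric space $X$ together with the continuity of $f$ on $\dom(f)$ to choose an open $V'$ with $f[K]\subseteq V'\subseteq \overline{V'}\subseteq V$ and an open $U_1$ with $K\subseteq U_1\subseteq \overline{U_1}\subseteq f^{-1}[V']$, so that $\overline{U_1}\subseteq\dom(f)$ and $f[\overline{U_1}]\subseteq V'$. I would set $W'=\langle \overline{U_1};V'\rangle$, which is a $\tau_{co}$-open neighborhood of $f$. To show $\overline{W'}\subseteq \langle K;V\rangle$, suppose $g\in\overline{W'}$. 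If some $x_0\in K$ were not in $\dom(g)$, pick an open $V_0$ with $x_0\in V_0\subseteq U_1$; then $g\in \DD^{-1}(V_0^-)$, an $\hco$-open neighborhood, yet every $h\in W'$ satisfies $V_0\subseteq \overline{U_1}\subseteq\dom(h)$, so $\DD^{-1}(V_0^-)\cap W'=\emptyset$, contradicting $g\in\overline{W'}$. If instead some $x_0\in K$ had $g(x_0)\notin V$, then $g(x_0)\in X\setminus\overline{V'}$ and $g\in\langle\{x_0\};X\setminus\overline{V'}\rangle$, a $\tau_{co}$-open neighborhood; but every $h\in W'$ has $h(x_0)\in h[\overline{U_1}]\subseteq V'$, again a contradiction.

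For $W=\DD^{-1}(V^-)$, I would choose $x_0\in \DD(f)\cap V$ and, by normality, an open $V_0$ with $x_0\in V_0\subseteq\overline{V_0}\subseteq V$, and set $W'=\DD^{-1}(V_0^-)$. Since $\overline{V_0}$ is compact, the set $(X\setminus \overline{V_0})^+$ is Fell-open, so $\{A\in CL(X):A\cap\overline{V_0}\neq\emptyset\}$ is Fell-closed. Continuity of $\DD$ then gives $\overline{W'}\subseteq \DD^{-1}(\{A:A\cap\overline{V_0}\neq\emptyset\})\subseteq \DD^{-1}(V^-)$. Alternatively, a direct neighborhood argument as in the previous paragraph works: if $g\in\overline{W'}$ had $\DD(g)\cap V=\emptyset$, then $\DD(g)\subseteq X\setminus\overline{V_0}$, so $g$ lies in the $\hco$-open set $\DD^{-1}((X\setminus\overline{V_0})^+)$, which is disjoint from $W'$, a contradiction.

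Since $W'$ inside each basic type has closure contained in the corresponding part, intersecting finitely many such $W'$ handles a general basic open $W$, because $\overline{\bigcap_i W'_i}\subseteq \bigcap_i\overline{W'_i}\subseteq \bigcap_i W_i=W$. The main obstacle I anticipate is the first case: the conditions $K\subseteq\dom(g)$ and $g[K]\subseteq V$ defining $\langle K;V\rangle$ are \emph{open} rather than closed conditions in $\hco$, so the containment $\overline{W'}\subseteq \langle K;V\rangle$ is not automatic and must be forced by tightening both constraints (enlarging $K$ to $\overline{U_1}$ and shrinking $V$ to $V'\subseteq\overline{V'}\subseteq V$) and then exploiting the $\hco$-open subbasic neighborhoods $\DD^{-1}(V_0^-)$ and $\langle\{x_0\};X\setminus\overline{V'}\rangle$ at candidate violation points to derive the needed contradictions.
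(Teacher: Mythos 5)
Your proposal is correct and, for the key case $\langle K;V\rangle$, uses essentially the same argument as the paper: shrink the target to $V'$ with $\overline{V'}\subseteq V$, enlarge the domain constraint to a compact neighborhood of $K$ inside $\dom(f)$, and rule out violations at points of $K$ using the $\hco$-open sets $\DD^{-1}(V_0^-)$ and $\langle\{x_0\};X\setminus\overline{V'}\rangle$. You are in fact more thorough than the paper, which only treats the $\langle K;V\rangle$ case explicitly and leaves the $\langle K;V\rangle^{-1}$ and $\DD^{-1}(V^-)$ cases and the finite-intersection reduction implicit.
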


\begin{proof}
Let $f\in \langle K; V\rangle$ with $K$ compact and $V$ open. We show that there is $M$ open such that $f\in M\subseteq \overline{M}\subseteq \langle K; V\rangle$. Let $U$ be open such that $K\subseteq U\subseteq \overline{U}\subseteq \dom(f)$. Let $W$ be open such that $f[K]\subseteq W\subseteq \overline{W}\subseteq V$. Consider $M=\langle K; W\rangle\cap \langle \overline{U}; X\rangle$. Notice that $M$ is open since $\DD^{-1}(X\setminus \overline{U})^+=\langle \overline{U}; X\rangle$. Clearly $f\in M$. 

We show that $\overline{M}\subseteq \langle K; \overline{W}\rangle$. Since $\langle K; \overline{W}\rangle \subseteq \langle K; V\rangle $ we are done. 
Let $g\in \overline{M}$.  We first show that $K\subseteq \dom(g)$.
In fact, suppose not and let $x\in K\setminus \dom(g)$. As $K\subseteq U$,   $g\in \DD^{-1}(U^-)$. This contradicts the fact that  $M\cap \DD^{-1}(U^-)=\emptyset$.
Finally, we show that $g\in \langle K; \overline{W}\rangle$. Suppose not and let $x\in K$ be such that $g(x)\not\in\overline{W} $. Then $g\in \langle \{x\}; X\setminus \overline{W}\rangle $. This contradicts the fact that $\langle \{x\}; X\setminus \overline{W}\rangle\cap M=\emptyset$.
\end{proof}

For the rest of the paper, we will always use on $\Gamma(X)$ the topology $\tau_{hco}$.  The next result will be needed in the sequel. 

\begin{prop}
\label{equality}
Let $X$ be a compact metric space and $V,W$ open sets. Then 
$E(V;W)=\{f\in \Gamma(X):\,V\subseteq \dom(f)\;\& \;W\subseteq \im(f)\;\&  \;f[V]=W\}$ is $G_\delta$.
\end{prop}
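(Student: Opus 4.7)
The plan is to exhibit $E(V;W)$ as a countable intersection of open sets of $\tau_{hco}$. Since $X$ is compact metric, every open set is a countable union of closed (hence compact) subsets, so I can fix sequences of compact sets $(K_n)_n$ and $(L_n)_n$ with $V=\bigcup_n K_n$ and $W=\bigcup_n L_n$ (for instance, $K_n=\{x\in X:d(x,X\setminus V)\geq 1/n\}$, and likewise for $L_n$). The candidate identity is
\[
E(V;W)\;=\;\bigcap_n \langle K_n; W\rangle\;\cap\;\bigcap_n \langle L_n; V\rangle^{-1}.
\]
Once this identity is verified, the $G_\delta$ property is immediate because each $\langle K_n;W\rangle$ and each $\langle L_n;V\rangle^{-1}$ is open in $\tau_{hco}$ by the very definition of the topology.

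For the $\subseteq$ inclusion, take $f\in E(V;W)$; for each $n$ one has $K_n\subseteq V\subseteq\dom(f)$ and $f[K_n]\subseteq f[V]=W$, so $f\in\langle K_n;W\rangle$, and also $L_n\subseteq W=f[V]$, so $f\in\langle L_n;V\rangle^{-1}$ straight from the definition. For the $\supseteq$ inclusion, an element $f$ of the right-hand side satisfies $K_n\subseteq\dom(f)$ and $f[K_n]\subseteq W$ for every $n$, which together give $V\subseteq\dom(f)$ and $f[V]\subseteq W$; it also satisfies $L_n\subseteq f[V]$ for every $n$, whence $W\subseteq f[V]$. Combining, $f[V]=W$, and in particular $W\subseteq\im(f)$, so $f\in E(V;W)$.

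The argument is essentially bookkeeping; the one observation worth isolating is that $\langle L_n;V\rangle^{-1}=\{f:L_n\subseteq f[V]\}$ already encodes both $L_n\subseteq\im(f)$ and $f^{-1}[L_n]\subseteq V$ in a single open condition, which is precisely what allows the equality $f[V]=W$ to be captured by countably many basic opens. No step appears genuinely delicate: the main content is producing the right compact exhaustions of $V$ and $W$ and matching them against the two inclusions that together encode $f[V]=W$.
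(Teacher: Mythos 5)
Your proof is correct and follows exactly the paper's route: the same compact exhaustions $V=\bigcup_n K_n$, $W=\bigcup_n L_n$ and the same identity $E(V;W)=\bigcap_n \langle K_n; W\rangle\cap\langle L_n; V\rangle^{-1}$, with the two inclusions spelled out in more detail than the paper bothers to. Nothing to add.
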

\proof
Let $K_n$ and $L_n$ be compact sets such that $V=\bigcup_n K_n$ and $W=\bigcup_n L_n$. Then 
\[
E(V;W)=\bigcap_n \;\langle K_n, W\rangle\; \cap \;\langle L_n, V\rangle^{-1}.
\]
\endproof

\section{Semigroups of partial lattice isomorphisms}

In this section we explore semigroups consisting of lattice isomorphisms. We start recalling the classical Munn semigroups following   \cite[section 5.4]{Howie}.

\subsection{Munn semigroups}

Let $E$ be a semilattice. For each $x\in E$ let 
\[
Ex=\{y\in E: y\leq x\}
\]
the principal ideal generated by $x$.
\[
\mathcal{U}=\{(x,y)\in E\times E:\; Ex\approx Ey\}.
\]
where $Ex\approx Ey$ means that there is a lattice isomorphism between $Ex$ and $Ey$, that is to say, there is an order isomorphism between them. For each $(x,y)\in E$, we let
\[
T_{x,y}=\{f:\; f:Ex\to Ey\;\;\mbox{is an isomorphism}\}
\]
and 
\[
T(E)=\bigcup\{T_{x,y}:\; (x,y)\in \mathcal{U}\}.
\]
It is known that $T(E)\subseteq I(E)$ and,  in fact, $T(E)$ is an inverse subsemigroup of $I(E)$ called the Munn semigroup associated to $E$.

\begin{prop}
Let $E$ be a semilattice. Then  

\begin{itemize}
\item[(i)] $T(E)$ is a closed  in $I(E)$ iff $\{Ex: x\in E\}$ is closed in $2^E$.

\item[(ii)] In $E$ is countable, then $\{Ex: x\in E\}$ and $T(E)$ are $F_\sigma$.

\end{itemize}

\end{prop}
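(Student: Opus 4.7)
The plan is to reduce all claims to the convergence characterization in $I(E)$: a net $f_\alpha \to f$ iff $\dom(f_\alpha) \to \dom(f)$ and $\im(f_\alpha) \to \im(f)$ in $2^E$, and $f_\alpha(x) = f(x)$ eventually for each $x \in \dom(f)$. This is Proposition~\ref{conv} when $E$ is countable, and in general it is the defining characterization of $\tau_{pp}$ as the coarsest topology making $\dom$, $\im$, and the $\ev_x$ continuous. The one genuinely non-routine idea is the use of the identity embedding $x \mapsto 1_{Ex}$ to transfer closedness between $T(E)$ and $\{Ex : x \in E\}$.

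For (i) $(\Leftarrow)$, I would take a net $f_\alpha \to f$ with $f_\alpha \in T_{x_\alpha, y_\alpha}$. Continuity of $\dom$ and $\im$ gives $Ex_\alpha \to \dom(f)$ and $Ey_\alpha \to \im(f)$ in $2^E$, so by closedness of $\{Ex : x\in E\}$, $\dom(f) = Ex$ and $\im(f) = Ey$ for some $x, y \in E$. For any $u, v \in Ex$, continuity of $\ev_u$ and $\ev_v$ into the discrete set $E$ forces $f_\alpha(u) = f(u)$ and $f_\alpha(v) = f(v)$ eventually, and the order-isomorphism property of $f_\alpha$ passes to the limit: $u \leq v \iff f_\alpha(u) \leq f_\alpha(v) \iff f(u) \leq f(v)$, so $f \in T_{x,y} \subseteq T(E)$. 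For $(\Rightarrow)$, given a net $Ex_\alpha \to A$ in $2^E$, the convergence criterion immediately yields $1_{Ex_\alpha} \to 1_A$ in $I(E)$; since $1_{Ex_\alpha} \in T_{x_\alpha, x_\alpha} \subseteq T(E)$ and $T(E)$ is closed, $1_A \in T(E)$, which forces $A = \dom(1_A) = Ex$ for some $x \in E$.

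For (ii), when $E$ is countable $\{Ex : x \in E\}$ is a countable union of singletons in the Hausdorff space $2^E$, hence $F_\sigma$. The index set $\mathcal{U}$ is also countable, so $T(E) = \bigcup_{(x,y) \in \mathcal{U}} T_{x,y}$, and it is enough to check that each $T_{x,y}$ is closed in $I(E)$. This is exactly the argument in (i) $(\Leftarrow)$ specialized to constant $x_\alpha = x$ and $y_\alpha = y$: any sequential limit of elements of $T_{x,y}$ has domain $Ex$, image $Ey$, and respects $\leq$; since $I(E)$ is metrizable in the countable case, sequential closedness implies closedness.

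The main subtlety I expect is the uncountable case of (i): Proposition~\ref{conv} is formally stated only for $I(\N)$, so for a general semilattice one must work with nets and appeal directly to the defining property of $\tau_{pp}$ rather than to a sequential convergence statement. Beyond that the argument is essentially bookkeeping, with the identity embedding $x \mapsto 1_{Ex}$ providing the one conceptual step in (i) $(\Rightarrow)$.
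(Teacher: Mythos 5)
Your argument is correct and follows essentially the same route as the paper: the backward direction of (i) and part (ii) rest on the observation that membership in $T(E)$ amounts to $\dom(f),\im(f)$ being principal ideals together with the (closed) order-preservation condition, while the forward direction of (i) uses the embedding $x\mapsto 1_{Ex}$ and continuity of $\dom$. The only cosmetic difference is that you argue with nets and, in (ii), decompose $T(E)$ as $\bigcup_{(x,y)\in\mathcal{U}}T_{x,y}$ rather than intersecting the closed order condition with the $\dom$- and $\im$-preimages of the $F_\sigma$ set $\{Ex:x\in E\}$; these are the same computation.
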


\begin{proof}
(i) Suppose $T(E)$ is closed. Let $1_{Ex}$ be the identity function on $Ex$. Then $Ex=\dom(1_{Ex})$. Using that $x\mapsto\dom(1_{Ex})$ is a continuous function, we easily get $\{Ex:\; x\in E\}$ is  closed in $2^E$. 
Conversely, suppose $\{Ex: x\in E\}$ is closed in $2^E$. Let $f\in I(E)$. Then  $f\in T(E)$ iff the following conditions hold:
\begin{itemize}
\item[(a)] $\dom(f), \im(f)\in \{Ex:\; x\in E\}$.
    
\item[(b)] For all $x, y, w,z\in E$, if $
    f\in v(x,w)\cap v(y,z)$, then $\;(x\leq y\;\Longleftrightarrow w\leq z)$.
\end{itemize}

Notice that both conditions (a) and (b) define closed subsets of $I(E)$.

(ii) Suppose $E$ is countable. Then clearly $\{Ex: x\in E\}$ is $F_\sigma$.  And from (a) and (b) above we conclude that $T(E)$ is $F_\sigma$.
\end{proof}

\begin{ex}
Let $E$ be the well ordered set $\omega +1$, that is, $E=\N\cup\{x_\infty\}$ where $\N$ its usual order and $x_\infty$ is the maximum of $E$.  Then $En=\{m\in \N: m\leq n\}$ and $Ex_\infty=E$. Clearly $\mathcal{U}=\Delta_E=\{(x,x): x\in E\}$.
Also $T_{x,x}=\{1_{Ex}\}$ for all $x\in E$.
That is to say
\[
T(E)=\{1_{Ex}:\; x\in E\}.
\]
Clearly, $T(E)$ is a closed subset of $I(E)$.
\end{ex}

\subsection{Partial isomorphism on a lattice of open sets}
We present a generalization of the Munn semigroups which provides interesting examples of Polish inverse semigroups. Similar but different inverse subsemigroups of $I(\N)$ were constructed in \cite{APU2022}.

Let $X$ be a compact metric space.  Let $\base$ be a countable basis for $X$. We will always assume that all bases are closed under finite unions and intersection and contains $\emptyset$. We say that $f\in I(\base)$ is an {\em order partial isomorphism} if  for every $u, v\in \dom(f)$,  
\[
f(u)\subseteq f(v) \;\Longleftrightarrow \;u\subseteq v
\]
and $f$ is a {\em complete map} if  for every sequence $u_n$ in $\dom(f)$ we have
\[
\bigcup_n u_n\in \dom(f) \Longleftrightarrow \bigcup_nf(u_n)\in \im(f).
\]
Let 
\[
\mathcal{ISO}(\base)=\{f\in I(\base ):\; \mbox{$f$ is an order isomorphism} \}
\]
and
\[
\mathcal{CISO}(\base)=\{f\in \mathcal{ISO}(\base):\; \mbox{$f$ is a complete map} \}.
\]

\begin{lema}
\label{subsemigroups}
Let $X$ be a compact metric space and $\base$ be a base for $X$. Then, $\mathcal{ISO}(\base)$ and $\mathcal{CISO}(\base)$ are  inverse subsemigroup of $I(\base)$  and $\mathcal{ISO}(\base)$ is closed in $(I(\base),\tau_{pp})$.
\end{lema}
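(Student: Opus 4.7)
The plan is to handle three separate claims: that $\mathcal{ISO}(\base)$ is an inverse subsemigroup of $I(\base)$, that $\mathcal{CISO}(\base)$ is also an inverse subsemigroup, and that $\mathcal{ISO}(\base)$ is $\tau_{pp}$-closed. The first and third are essentially formal, while the second has a genuine subtlety involving how order isomorphisms interact with set-theoretic unions.

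First I would verify that $\mathcal{ISO}(\base)$ is closed under inverse and composition. For $f\in \mathcal{ISO}(\base)$, any pair $a,b\in\im(f)=\dom(f^{-1})$ has the form $a=f(u)$, $b=f(v)$ with $u,v\in\dom(f)$, and $a\subseteq b\Leftrightarrow u\subseteq v\Leftrightarrow f^{-1}(a)\subseteq f^{-1}(b)$. For composition, note $\dom(f\circ g)=g^{-1}(\dom(f)\cap\im(g))\subseteq\dom(g)$, so for $u,v$ in this domain, both $g(u),g(v)\in\dom(f)$, and the order-isomorphism property chains: $u\subseteq v\Leftrightarrow g(u)\subseteq g(v)\Leftrightarrow f(g(u))\subseteq f(g(v))$.

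For $\mathcal{CISO}(\base)$, the key intermediate fact I would establish is that whenever $f\in\mathcal{ISO}(\base)$, $(u_n)\subseteq\dom(f)$, and both $\bigcup u_n\in\dom(f)$ and $\bigcup f(u_n)\in\im(f)$, we have the equality $f(\bigcup u_n)=\bigcup f(u_n)$. The argument: both sides are the $\subseteq$-least element of $\im(f)$ containing every $f(u_n)$, the first because $f$ is an order isomorphism and the second because set-theoretic union is automatically the $\subseteq$-least upper bound once it lies in the poset. With this in hand, closure of $\mathcal{CISO}(\base)$ under composition becomes a bookkeeping argument: if $\bigcup u_n\in\dom(f\circ g)$, then $\bigcup u_n\in\dom(g)$, so by completeness of $g$ and the equality just proved, $\bigcup g(u_n)=g(\bigcup u_n)\in\dom(f)$; applying completeness of $f$ then yields $\bigcup(f\circ g)(u_n)\in\im(f\circ g)$. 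The converse direction unwinds these steps in reverse, and closure under inverse is immediate from the symmetry of the defining condition.

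Finally, for closedness of $\mathcal{ISO}(\base)$ in $\tau_{pp}$, the plan is to exhibit it as a countable intersection of closed sets, one for each pair $u,v\in\base$. Precisely, let
\[
C_{u,v}=\{f\in I(\base): u,v\in\dom(f)\Rightarrow (u\subseteq v\Leftrightarrow f(u)\subseteq f(v))\},
\]
so that $\mathcal{ISO}(\base)=\bigcap_{u,v\in\base} C_{u,v}$. If $u\subseteq v$, the complement of $C_{u,v}$ equals $\bigcup\{v(u,a)\cap v(v,b): a,b\in\base,\ a\not\subseteq b\}$, a countable union of basic $\tau_{pp}$-open sets; the case $u\not\subseteq v$ is analogous with the inclusion reversed. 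Hence each $C_{u,v}$ is closed and $\mathcal{ISO}(\base)$ is too. The only place I anticipate any real friction is in the proof of the union-preservation lemma underlying the $\mathcal{CISO}$ argument, but the poset-theoretic framing keeps it clean.
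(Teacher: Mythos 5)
Your proposal is correct and takes essentially the same route as the paper: the paper dismisses the algebraic claims as clear (you supply the details, including the right least-upper-bound lemma showing order isomorphisms carry unions to unions when both lie in the relevant lattices, which is the only point of substance in the $\mathcal{CISO}(\base)$ case), and it proves closedness of $\mathcal{ISO}(\base)$ via the sequential convergence criterion of Proposition \ref{conv}. Your alternative presentation of $\mathcal{ISO}(\base)$ as a countable intersection of sets whose complements are unions of basic $\tau_{pp}$-open sets is interchangeable with that sequential argument.
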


\begin{proof} It is clear that $\mathcal{ISO}(\base)$ and $\mathcal{CISO}(\base)$ are inverse subsemigroups of $I(\base)$. 
To show that $\mathcal{ISO}(\base)$ is  closed in $I(\base)$,  let  $f_n\in \mathcal{ISO}(\base)$ for all $n$ and suppose $f_n\to f$ with $f\in I(\base)$. Let $u,v\in \dom(f)$, then there is $n_0$ such that $u,v\in \dom(f_n)$ and $f_n(u)=f(u)$ and $f_n(v)=f(v)$ for all $n\geq n_0$. From this follows that $f$ is a partial order isomorphism.
\end{proof}

We say that $L\subseteq\base$ is {\em hereditary} if for all $u\in L$ and $v\subseteq u$ with $v\in \base$, we have that $v\in L$. 
Let 
\[
\mathcal{L}=\{L\subseteq \base: \mbox{$L$ is hereditary and closed under finite unions and intersections}\}
\]
and 
\[
S(\base)=\{f\in \mathcal{ISO}(\base):\; \dom(f),\im(f)\in \mathcal{L}\}.
\]
For each open set $V\subseteq X$ (including $\emptyset$), let 
\[
\widetilde{V}=\{ u\in \base:\; u\subseteq V\}
\]
and
\[
\mathcal{C}=\{\widetilde{V}:\; V\subseteq X\; \mbox{is open}\}.
\]
Notice that $\mathcal{C}\subseteq \mathcal{L}$.
Let
\[
\widetilde{S}(\base)=\{f\in \mathcal{CISO}(\base):\; \dom(f),\im(f)\in \mathcal{C}\}.
\]

\begin{lema}
Let $X$ be a compact metric space and $\base$ be a base for $X$. If $f\in S(\base)$, then $f$ preserves finite unions and intersections.
\end{lema}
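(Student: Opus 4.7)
The plan is to argue directly from the definitions, exploiting the fact that $f$ is an order isomorphism between two sublattices of $\base$ that are both closed under finite unions and intersections (since $\dom(f), \im(f)\in \mathcal{L}$).

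I would first handle unions. Fix $u,v\in \dom(f)$. Since $\dom(f)\in \mathcal{L}$ we have $u\cup v\in \dom(f)$, and since $\im(f)\in \mathcal{L}$ we have $f(u)\cup f(v)\in \im(f)$, so there is a unique $w\in \dom(f)$ with $f(w)=f(u)\cup f(v)$. One inclusion is immediate: from $u\subseteq u\cup v$ and $v\subseteq u\cup v$, the order-isomorphism property gives $f(u)\subseteq f(u\cup v)$ and $f(v)\subseteq f(u\cup v)$, hence $f(u)\cup f(v)\subseteq f(u\cup v)$. For the reverse inclusion, $f(u)\subseteq f(w)$ and $f(v)\subseteq f(w)$ yield $u\subseteq w$ and $v\subseteq w$ by the order-isomorphism condition applied to $f^{-1}$, so $u\cup v\subseteq w$, and therefore $f(u\cup v)\subseteq f(w)=f(u)\cup f(v)$.

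The argument for intersections is completely symmetric: $u\cap v\in \dom(f)$ and $f(u)\cap f(v)\in \im(f)$ (write it as $f(w')$ for some $w'$), the inclusion $f(u\cap v)\subseteq f(u)\cap f(v)$ comes from $u\cap v\subseteq u,v$, and the reverse inclusion comes from $f(w')\subseteq f(u),f(v)$, which forces $w'\subseteq u\cap v$ and hence $f(w')\subseteq f(u\cap v)$.

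There is no real obstacle here; the statement is a standard consequence of the fact that an order isomorphism between two lattices automatically preserves whatever joins and meets happen to exist on both sides. The only thing that must be checked is that the relevant joins and meets really do exist in both $\dom(f)$ and $\im(f)$, which is exactly why the definition of $S(\base)$ requires $\dom(f),\im(f)\in \mathcal{L}$ (closed under finite unions and intersections). Without that closure assumption the preservation could fail, so this lemma is really recording that $\mathcal{L}$ was defined with the right closure properties to make $f$ a lattice homomorphism, not merely an order homomorphism.
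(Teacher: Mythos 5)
Your proof is correct. The paper actually states this lemma without any proof (treating it as routine), and your argument is exactly the standard one that fills that gap: the closure of $\dom(f)$ and $\im(f)$ under finite unions and intersections (from membership in $\mathcal{L}$) guarantees the relevant joins and meets exist on both sides, and the two-sided order-isomorphism condition then forces $f(u\cup v)=f(u)\cup f(v)$ and $f(u\cap v)=f(u)\cap f(v)$ by the sandwich of inclusions you give. Your closing remark correctly identifies why the hypothesis $\dom(f),\im(f)\in\mathcal{L}$ is what makes the lemma work.
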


\begin{lema}
\label{CaracC}
Let $X$ be a compact metric space, $\base$ a base for $X$ and $D\subseteq \base$. Then, $D\in \mathcal{C}$ if and only if    $\bigcup_n u_n\in D$ for all  sequence $(u_n)$ of elements of  $D$ with  $\bigcup_n u_n\in \base$.
\end{lema}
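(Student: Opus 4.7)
The plan is to verify the two implications separately: the ``only if'' direction is immediate, while the ``if'' direction requires using second countability of $X$ together with the closure of $\base$ under finite intersections. For the forward direction, suppose $D=\widetilde V$ for some open $V\subseteq X$, and let $(u_n)$ be a sequence in $D$ with $\bigcup_n u_n\in\base$. Each $u_n\subseteq V$, so $\bigcup_n u_n\subseteq V$, and therefore $\bigcup_n u_n\in\widetilde V=D$.

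For the backward direction, set $V=\bigcup_{u\in D}u$, which is open; the goal is to show $D=\widetilde V$. The inclusion $D\subseteq\widetilde V$ is by construction. For the reverse inclusion, fix $w\in\widetilde V$, so $w\in\base$ and $w\subseteq V$. Since $X$ is compact metric it is second countable, so $\base$ is countable and hence so is $D$; enumerate $D=\{d_n:n\in\N\}$. Then $w=w\cap V=\bigcup_n(w\cap d_n)$, and each $w\cap d_n$ lies in $\base$ because $\base$ is closed under finite intersections. Since $w\cap d_n\subseteq d_n\in D$ and $D$ is hereditary, we obtain $w\cap d_n\in D$. Thus $(w\cap d_n)_n$ is a sequence in $D$ whose union $w$ belongs to $\base$, so the standing union hypothesis yields $w\in D$.

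The step that demands the most care is the appeal to hereditariness of $D$ at the end of the backward direction: without it, a singleton $D=\{u\}$ with $u\in\base$ would vacuously satisfy the union condition but fail to equal $\widetilde V$ for any open $V$. The lemma is therefore to be read with $D\in\mathcal L$ implicit (consistent with $\mathcal C\subseteq\mathcal L$ noted just before the statement), and the essential technical ingredients are second countability (to make $D$ countable and split $w$ as a countable union) and the standing closure of $\base$ under finite intersections (to keep each $w\cap d_n$ inside $\base$, hence inside $D$ by the hereditary property).
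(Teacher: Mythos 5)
Your proof is correct and follows essentially the same route as the paper's: the ``only if'' direction is immediate, and for the converse you set $V=\bigcup D$ and write each $w\in\widetilde{V}$ as a countable union of elements of $D$ lying inside $w$. You are also right to flag the hereditariness issue: the paper's one-line version of this step (``since $\base$ is a basis there are $u_n\in D$ with $u=\bigcup_n u_n$'') silently uses that $D$ is hereditary and that $\base$ is countable and closed under finite intersections, and the lemma as literally stated fails for, e.g., a singleton $D=\{u\}$, so the hypothesis $D\in\mathcal{L}$ must indeed be read as implicit.
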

\proof The ``if'' part is clear. For the other direction, suppose $D$ satisfies the condition and let $V=\bigcup D$. We show that $D=\widetilde{V}$. In fact, clearly $D\subseteq \widetilde{V}$. Let $u\in \widehat{V}$. Since $\base$ is a basis, then there are $u_n\in D$ such that $u=\bigcup_n u_n$. By hypothesis $u=\bigcup_n u_n\in D$.
\endproof

\begin{lema}
\label{closedsemigroups}
Let $X$ be a compact metric space and $\base$ be a countable base for $X$. Then, 

\begin{itemize}
\item[(i)] $\mathcal{L}$ is a closed subset of $2^{\base}$.

\item[(ii)] If $f\in S(\base)$ and $L\in \mathcal{L}$, then $f(L\cap \dom(f))\in \mathcal{L}$.
\item[(iii)] $S(\base)$ is a closed inverse subsemigroup  of $\mathcal{ISO}(\base)$ and therefore $S(\base)$ is Polish.

\item[(iv)] $\mathcal{C}$ is an $F_{\sigma\delta}$ subset of $2^\base$

\item[(v)] $\widetilde{S}(\base) $ is an $F_{\sigma\delta}$ inverse subsemigroup of $\mathcal{ISO}(\base)$.

\end{itemize}

\end{lema}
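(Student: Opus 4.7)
Parts (i)--(iii) are topological bookkeeping. For (i) the plan is to observe that $\mathcal{L}$ is defined by a countable conjunction of basic clopen constraints on $2^{\base}$: heredity ("$u\in L\Rightarrow v\in L$'' for each pair $v\subseteq u$ in $\base$), closure under finite union, and closure under finite intersection. Each such constraint is clopen, so $\mathcal{L}$ is closed. For (ii) I would verify directly: given $f\in S(\base)$, $L\in\mathcal{L}$ and $L'=f(L\cap\dom(f))$, heredity of $L'$ is obtained by pulling $w\subseteq v\in L'$ back through $f^{-1}$ using heredity of $\im(f)\in\mathcal{L}$ and then invoking the order-isomorphism property together with heredity of $L$; closure of $L'$ under $\cup$ and $\cap$ follows from the previous lemma that $f$ preserves finite unions and intersections together with the corresponding closure properties of $L$ and $\im(f)$. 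For (iii), I would write $S(\base)=\mathcal{ISO}(\base)\cap\dom^{-1}(\mathcal{L})\cap\im^{-1}(\mathcal{L})$, which by Lemma~\ref{subsemigroups}, part (i), and continuity of $\dom$ and $\im$ exhibits $S(\base)$ as a closed subset of the Polish space $I(\base)$, hence Polish.

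For (iv) the main obstacle is that Lemma~\ref{CaracC} quantifies over an uncountable family of sequences. I would eliminate this via compactness of $X$: for $u\in\base$ and $v'\in\base$ with $\overline{v'}\subseteq u$, the closure $\overline{v'}$ is compact, so $u\subseteq\bigcup D$ is equivalent to requiring that, for every such $v'$, some finite $F\subseteq D\cap\{w\in\base:w\subseteq u\}$ satisfies $v'\subseteq\bigcup F$. For fixed $u,v',F$, the condition "$F\subseteq D$'' is clopen in $2^{\base}$. Combining these, the condition "$u\subseteq\bigcup D\Rightarrow u\in D$'' becomes $F_{\sigma}$ in $D$, and the countable intersection of these conditions over $u\in\base$ exhibits $\mathcal{C}$ as $F_{\sigma\delta}$.

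For (v) I plan to decompose $\widetilde{S}(\base)=\mathcal{ISO}(\base)\cap\dom^{-1}(\mathcal{C})\cap\im^{-1}(\mathcal{C})\cap\{f:f\text{ is complete}\}$. The first three factors are $F_{\sigma\delta}$ by (iv), continuity of $\dom$ and $\im$, and Lemma~\ref{subsemigroups}. The hardest step is bounding the complete condition. My plan is to parametrize it by pairs $(u,v)\in\base\times\base$ via the maximal candidate $A^{*}(u,v,f)=\{w\in\dom(f):w\subseteq u,\ f(w)\subseteq v\}$: the existence of a countable $A\subseteq\dom(f)$ with $\bigcup A=u$ and $\bigcup f(A)=v$ is equivalent to $A^{*}(u,v,f)$ itself witnessing these equations. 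By the compactness argument of (iv), both "$\bigcup A^{*}(u,v,f)=u$'' and "$\bigcup f(A^{*}(u,v,f))=v$'' are $G_{\delta}$ in $f$, so the per-pair implication "$[\bigcup A^{*}(u,v,f)=u\wedge\bigcup f(A^{*}(u,v,f))=v]\Rightarrow[u\in\dom(f)\Leftrightarrow v\in\im(f)]$'' is $F_{\sigma}$, and the countable intersection over $(u,v)\in\base^{2}$ stays within $F_{\sigma\delta}$; a complementary family of conditions, indexed by $u\in\base$, will control the failure cases in which the relevant supremum lies outside $\base$. Finally, $\widetilde{S}(\base)$ being an inverse subsemigroup should be routine: inversion merely swaps $\dom$ and $\im$ and preserves the conditions symmetrically, while closure under composition uses (ii) adapted to $\mathcal{C}$, via the identity $g^{-1}(\widetilde{V}\cap\im(g))=\widetilde{W}$ for $W=\bigcup\{w\in\dom(g):g(w)\subseteq V\}$.
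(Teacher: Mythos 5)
Parts (i)--(iii) follow the paper's argument essentially verbatim; the only omission is that in (iii) you never address why $S(\base)$ is \emph{algebraically} closed under composition and inversion (this is exactly where part (ii) is used in the paper, via $\dom(f\circ g)=g^{-1}(\im(g)\cap\dom(f))$), whereas your write-up only establishes topological closedness. Part (iv) is a correct mild variant: you test the inclusion $u\subseteq\bigcup D$ against the countably many $v'\in\base$ with $\overline{v'}\subseteq u$ and use compactness of $\overline{v'}$, while the paper quantifies over all points of $X$ and uses compactness of $X$ to push a $G_\delta$ relation through the universal quantifier; both routes show $\{D:u\subseteq\bigcup D\}$ is $G_\delta$ and hence $\mathcal{C}$ is $F_{\sigma\delta}$.

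The genuine gap is in (v), in your handling of the completeness condition. Your maximal candidate $A^{*}(u,v,f)$ does correctly decide, for a fixed pair $(u,v)\in\base^2$, whether some sequence $(u_n)$ in $\dom(f)$ satisfies $\bigcup_n u_n=u$ and $\bigcup_n f(u_n)=v$, because membership of each $u_n$ in $A^{*}$ is a monotone (pointwise) condition. But completeness also constrains sequences for which $\bigcup_n f(u_n)$ (or $\bigcup_n u_n$) is an open set lying outside $\base$ altogether, hence automatically outside $\im(f)$, forcing $\bigcup_n u_n\notin\dom(f)$. For these cases you only promise ``a complementary family of conditions'' without exhibiting one, and the maximal-candidate trick does not extend: the predicate ``$\bigcup_n f(u_n)\notin\base$'' is not monotone in the subfamily $\{u_n\}$ (enlarging the family can move the union into or out of $\base$), so there is no canonical witness to test, and the existential quantifier over sequences remains an unreduced $\Sigma^1_1$-type quantifier. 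The paper avoids this entirely by a different decomposition: it first proves that $T=S(\base)\cap\mathcal{CISO}(\base)$ is \emph{closed} by a sequential argument using Proposition \ref{conv} --- the key point being that when $\dom(f_n)\in\mathcal{L}$, heredity puts all the $u_m$ into $\dom(f_n)$ simultaneously once $\bigcup_m u_m\in\dom(f_n)$, so completeness passes to the limit --- and then writes $\widetilde{S}(\base)=T\cap\dom^{-1}(\mathcal{C})\cap\im^{-1}(\mathcal{C})$, a closed set intersected with an $F_{\sigma\delta}$ set. You should either adopt that route or supply the missing Borel reduction for the out-of-$\base$ cases; as written your proof of (v) is incomplete.
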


\begin{proof}
(i) $ L$ is not hereditary iff  there are  $u, v\in \base$ such that $u\subseteq v$ with $v\in L$ and $u\not\in L$. This  implies that the collection of non hereditary sets is open.  The rest is analogous. 

(ii) Let $f\in S(\base)$ and $L\in \mathcal{L}$.  Let us see that $f(L\cap \dom(f))$ is hereditary. Assume $v\in f(L\cap \dom(f))$ and $u\in B$ is such that $u\subseteq v$. Since $v\in \im(f)$, $u\subseteq v$ and $\im(f)\in \mathcal{L}$, we have that $u\in \im(f)$. Let $o\in \dom(f)$ be such that $u=f(o)$. We wish to show that $o\in L\cap \dom(f)$.
Since $v\in f(L\cap \dom(f))$, there is $w\in L\cap \dom(f)$ such that $v=f(w)$.  
Then $f(o)\subseteq f(w)$ and, as $f\in \mathcal{ISO}(\base)$, $o\subseteq w$. As $w\in L$ and $L$ is hereditary,  $o\in L$. Therefore $o\in L\cap \dom(f)$.

Now, let us see that $f(L\cap \dom(f))$ is closed under finite unions.
Let $o,w\in f(L\cap \dom(f))$, then there are $u,v\in L\cap \dom(f)$ such that $o=f(u)$ and $w=f(v)$, then $u\cup v \in L\cap \dom(f)$. Since $f\in S(\base)$,  $f(u)\cup f(v)\subset f(u \cup v)$. As  $\im(f)$ is hereditary, $f(u)\cup f(v)\in \im(f)$. Let $q\in \dom(f)$ be such that $f(q)=f(u)\cup f(v)$. Then $f(q)\subseteq f(u\cup v)$, and thus $q\subset u\cup v$. Since $u\cup v\in L$ and $L$ is hereditary,  $q\in L$, that is $f(q)=o\cup w\in f(L\cap \dom(f))$.

(iii) It is clear that $S(\base)$ is closed under the inverse operation. Let us see that $S(\base)$ is a semigroup. Let $f,g\in S(\base)$, since $f,g^{-1}\in S$, $\dom(f),\im(g)\in \mathcal{L}$, we have  by  $(ii)$ that $\dom(f\circ g)=g^{-1}(\im(g)\cap \dom(f))\in  \mathcal{L}$ and $\im(f\circ g)=f(\im(g)\cap \dom(f))\in  \mathcal{L}$.  To verify that $S(\base)$ is topologically closed just observe that $\dom, \im$ are continuous, $\mathcal{L}$ and $\mathcal{ISO}$ are (topologically) closed. 

(iv) Notice that
\[
\begin{array}{lcl}
A\in \mathcal{C} & \Longleftrightarrow &\forall u\in\base\; (\; u\subseteq \bigcup A \rightarrow \; u\in A)\\
 & \Longleftrightarrow &\forall u\in\base\;\{\;\forall x\in X\,[ \, x\in u\rightarrow \exists v\in \base\, (\,v\in A\wedge x\in v)]\; \rightarrow u\in A\,\}.
\end{array}
\]
Let $R_u\subseteq 2^\base\times X$ for $u\in \base$ defined by 
\[
R_u=\{\; (A,x):\;  x\in u\rightarrow \exists v\in \base\, (\,v\in A\wedge x\in v)\;\}.
\]
Then,  $R_u$ is $G_\delta$ (it is a union of a closed set and an open set). As $X$ and $2^\base$ are compact, $\{A\in 2^\base:\; \forall x\in X \;(A,x)\in R_u\}$ is $G_\delta$. Therefore $\mathcal{C}$ is  $F_{\sigma\delta}$. 

(v) By Lemma \ref{subsemigroups}, $\mathcal{CISO}(\base)$ is an inverse semigroup. Thus, to verify that $\widetilde{S}(\base)$ is a semigroup, we only need to show that if $f,g\in \widetilde{S}(\base)$, then $\dom(f\circ g), \im(f\circ g)\in \mathcal{C}$. This follows from Lemma \ref{CaracC}.

To show that $\widetilde{S}(\base) $ is an $F_{\sigma\delta}$ set, consider the following collection:
\[
T=\{f\in \mathcal{CISO}(\base):\; \dom(f), \im(f)\in \mathcal{L}\}=S(\base)\cap \mathcal{CISO}(\base).
\]
We claim that $T$ is closed in $I(\base)$. In fact, suppose $f_n \in T$ and $f_n \to f$. Then by part (iii), $f\in S(\base)$. We need to verify that $f$ is a complete map. Let $u_n \in \dom(f)$ be  such that $u=\bigcup_n u_n \in \dom(f)$. Therefore, there is $n_0$ such that  $u\in \dom(f_n)$ for all $n\geq n_0$. Since $\dom(f_n)\in \mathcal{L}$, $u_m \in \dom(f_n)$ for all $m$ and all $n\geq n_0$. As $f_n$ is a complete map, we conclude that $v=\bigcup_m f_n(u_m)\in \im(f_n)$ for all $n\geq n_0$. Hence  $v\in \im(f)$. The reciprocal is analogous and we conclude that $f$ is a complete map. 

Finally we observe that $f\in \widetilde{S}(\base)$ iff $f\in T$ and $\dom(f), \im(f)\in \mathcal{C}$. Since $T$ is closed and  $\dom$ and $\im$ are continuous, we conclude by  part (iv) that $\widetilde{S}(\base)$ is $F_{\sigma\delta}$. 
\end{proof}

Let 
\[
S_1(\base)=\{f\in S(\base): \forall u,v\in \base\;(\;\overline{v}\subseteq u\in \dom(f)\;\Leftrightarrow\; \overline{f(v)}\subseteq f(u))\}.
\]

\begin{lema}
\label{S1-closed}
Let $X$ be a compact metric space and $\base$ be a countable base for $X$. Then, $S_1(\base)$ is a closed inverse subsemigroup of $S(\base)$ and  hence  Polish. 
\end{lema}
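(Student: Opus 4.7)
The plan is to establish (a) that $S_1(\base)$ is closed under composition and inverses, and (b) that it is topologically closed in $S(\base)$. Polishness then follows immediately from Lemma \ref{closedsemigroups}(iii), since a closed subset of a Polish space is Polish.

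For (a), closure under inverses is essentially built-in: the defining condition of $S_1(\base)$ is a biconditional, so reading it right-to-left gives exactly the $S_1$-condition for $f^{-1}$, with the required heredity of $\im(f)$ coming from $f\in S(\base)$. For closure under composition, take $f,g\in S_1(\base)$ and $u,v\in\base$ with $\overline{v}\subseteq u\in\dom(f\circ g)$. Then $u\in\dom(g)$, and since $v\subseteq\overline{v}\subseteq u$ and $\dom(g)$ is hereditary (because $g\in S(\base)$), also $v\in\dom(g)$. Applying the $S_1$-condition to $g$ yields $\overline{g(v)}\subseteq g(u)$; but $g(u)\in\dom(f)$ by the definition of $f\circ g$, and heredity of $\dom(f)$ then forces $g(v)\in\dom(f)$. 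Applying the $S_1$-condition to $f$ gives $\overline{f(g(v))}\subseteq f(g(u))$. The reverse implication of the biconditional follows symmetrically.

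For (b), suppose $f_n\in S_1(\base)$ with $f_n\to f$ in $S(\base)$. Fix $u,v\in\base$. If $\overline{v}\subseteq u\in\dom(f)$, then heredity of $\dom(f)$ gives $v\in\dom(f)$, and by Proposition \ref{conv} there is $n_0$ such that for all $n\geq n_0$ we have $u,v\in\dom(f_n)$ with $f_n(u)=f(u)$ and $f_n(v)=f(v)$. Since each $f_n\in S_1(\base)$, we conclude $\overline{f(v)}=\overline{f_n(v)}\subseteq f_n(u)=f(u)$. Conversely, if $\overline{f(v)}\subseteq f(u)$ with $u,v\in\dom(f)$, then for large $n$ both $u,v\in\dom(f_n)$ with matching values, the $S_1$-condition for $f_n$ yields $\overline{v}\subseteq u\in\dom(f_n)$, and in particular $\overline{v}\subseteq u$ while $u\in\dom(f)$ was already given. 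Hence $f\in S_1(\base)$.

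The main point to watch is the interplay between the defining biconditional of $S_1$ and the implicit domain conditions: one must use heredity of $\dom$ and $\im$ (which is available because everything happens inside $S(\base)$) to guarantee that all the elements actually lie in $\dom(f_n)$ (or $\dom(f)$, $\dom(g)$) before invoking the $S_1$-condition. Once that bookkeeping is done, both the algebraic and topological closure statements reduce directly to Proposition \ref{conv}.
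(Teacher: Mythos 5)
Your proposal is correct and follows essentially the same route as the paper: verify algebraic closure under composition and inversion by chaining the $S_1$-biconditional through $g$ and then $f$, and verify topological closure via the convergence criterion of Proposition \ref{conv}. In fact you are somewhat more careful than the paper's own terse argument, since you make explicit the heredity bookkeeping (that $v\in\dom(g)$ and $g(v)\in\dom(f)$ must be secured before the $S_1$-condition can be invoked), a point the paper glosses over.
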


\proof
Let $f,g\in S_1(\base)$ and $u,v\in \base$. Suppose $\overline{v}\subseteq u\in\dom(f\circ g)$. Then $u\in \dom(g) $. As $g\in S_1(\base)$,  $\overline{g(v)}\subseteq g(u)$. As $g(u)\in \dom(f)$ and $f\in S_1(\base)$, $\overline{f(g(u))}\subseteq f(g(u))$. Conversely, if $\overline{f(g(u))}\subseteq f(g(u))$, then $\overline{v}\subseteq u$. Hence $f\circ g\in S_1(\base)$. A similar argument shows that $f^{-1}\in S_1(\base)$. 

It is straightforward to verify that if $f_n\in S_1(\base)$ and $f_n\to f$, then $f\in S_1(\base)$. 
\endproof

\begin{lema}
\label{C=L}
Let $X$ be a 0-dimensional compact metric space and $\base$ be a countable base for $X$ consisting of clopen sets. Then $\mathcal{L}=\mathcal{C}$ and $S(\base) = S_1(\base)$. 

\end{lema}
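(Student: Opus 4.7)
The plan is to establish $\mathcal{L}\subseteq \mathcal{C}$ (the reverse inclusion was already noted right after the definition of $\mathcal{C}$), and then derive $S(\base)=S_1(\base)$ essentially formally from the fact that a clopen set coincides with its closure.

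For $\mathcal{L}\subseteq \mathcal{C}$ I would argue as follows. Given $L\in \mathcal{L}$, set $V=\bigcup L$, which is an open subset of $X$, and show that $L=\widetilde{V}$. The inclusion $L\subseteq \widetilde{V}$ is immediate. For $\widetilde{V}\subseteq L$, take $u\in \widetilde{V}$: then $u$ is a clopen subset of $X$ covered by the open family $L$. The crucial observation is that $u$, being clopen in the compact space $X$, is itself compact; hence there exist $v_1,\ldots,v_n\in L$ with $u\subseteq v_1\cup\cdots\cup v_n$. Since $L$ (and $\base$) is closed under finite unions, $v_1\cup\cdots\cup v_n\in L$, and the hereditarity of $L$ together with $u\in \base$ then yields $u\in L$. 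This compactness step is the only place where the clopen hypothesis enters, and it is the heart of the argument.

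For $S(\base)=S_1(\base)$, the inclusion $S_1(\base)\subseteq S(\base)$ is tautological, so only the converse needs attention. Given $f\in S(\base)$ and $u,v\in \base$, the hypothesis that $\base$ consists of clopen sets gives $\overline{v}=v$ and, whenever $v\in \dom(f)$, also $\overline{f(v)}=f(v)$ since $f(v)\in \base$ is itself clopen. The defining condition of $S_1(\base)$ therefore reduces to $v\subseteq u\in \dom(f)\Leftrightarrow f(v)\subseteq f(u)$. In the forward direction, hereditarity of $\dom(f)\in \mathcal{L}$ forces $v\in \dom(f)$ and then the order-isomorphism property of $f$ gives $f(v)\subseteq f(u)$; the reverse direction is the order-isomorphism property applied backwards, with $u,v\in \dom(f)$ implicit in $f(u),f(v)$ being defined.

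I do not foresee any real obstacle beyond the compactness step in the first part; once that is in place, the rest is a direct unpacking of definitions.
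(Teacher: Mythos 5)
Your proposal is correct and follows essentially the same route as the paper: the key step in both is that a clopen $u\subseteq\bigcup L$ is compact, so a finite subcover from $L$ together with closure under finite unions and hereditarity yields $u\in L$. Your explicit verification that $S(\base)=S_1(\base)$ (reducing the defining condition of $S_1(\base)$ via $\overline{v}=v$ and $\overline{f(v)}=f(v)$ to the order-isomorphism property) is a welcome addition, since the paper's proof leaves that half of the statement implicit.
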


\begin{proof}
It is clear that $\mathcal{C}\subseteq\mathcal{L}$. Let $L\in \mathcal{L}$, let us see that $L=\widehat{\bigcup L}$. Clearly $L\subseteq \widehat{\bigcup L}$. Let  $u\in \widehat{\bigcup L}$, then $u\subseteq \bigcup L$, then for each $x\in u$, there is $u_x\in L$ such that $x\in u_x$, since $u$ is clopen (thus, compact), there are $x_1,...,x_n\in u$ such that $u\subseteq u_{x_1}\cup...\cup u_{x_n}$, as $L$ is closed under finite unions and it is hereditary, we have that $u\in L$.
\end{proof}

\section{Polishness  of  $\Gamma(X)$}

In this section we present one of the main result of this paper. Let $X$ be a 0-dimensional compact metric space, we show that $\Gamma(X)$ is isomorphic to $S(\base)$ for $\base$ a base of clopen subsets of $X$. The idea is to codify a partial homeomorphism with a partial $\subseteq$-isomorphism between subsets of $\base$. 

Let $X$ be a compact metric space and $\base$ be a countable  base for $X$. The following subsemigroup of $\Gamma(X)$ contains all  partial homeomorphisms that can be coded by an element of $S(\base)$. Consider
\[
\Gamma(X,\base)=\{f\in \Gamma(X):\; f[u],f^{-1}[v]\in \base\;\mbox{for all $u\subseteq\dom(f)$ and $v\subseteq \im(f)$ with $u,v\in \base$}\}
\]
and
\[
H(X,\base)=\{f\in H(X):\; f\in \Gamma(X, \base)\}.
\]

\begin{prop}
\label{GammaX-Borel}
Let $X$ be a compact metric space and $\base$ be a countable base for $X$. Then $\Gamma(X,\base)$ and $H(X,\base)$ are Borel in $\Gamma(X)$.
\end{prop}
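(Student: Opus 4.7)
The plan is to express both $\Gamma(X,\base)$ and $H(X,\base)$ as countable Boolean combinations of closed subsets of $\Gamma(X)$ and of the $G_\delta$ sets $E(u;v)$ furnished by Proposition \ref{equality}; the conclusion will then follow immediately from the closure properties of the Borel $\sigma$-algebra.

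First I would verify that for each fixed $u\in\base$, the set $C_u:=\{f\in\Gamma(X):u\subseteq\dom(f)\}$ is closed in $\Gamma(X)$. Since $X$ is compact metric, $u$ is an $F_\sigma$ open set, so we can write $u=\bigcup_n K_n$ with each $K_n$ compact, and then $C_u=\bigcap_n\langle K_n;X\rangle$. (Equivalently, using the continuity of $\DD:\Gamma(X)\to CL(X)$, $C_u$ is the preimage of $\{A\in CL(X):A\cap u=\emptyset\}=CL(X)\setminus u^-$, which is Fell-closed.) The symmetric argument via $\II$ shows that $D_v:=\{f:v\subseteq\im(f)\}$ is closed for every $v\in\base$.

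The key observation is then that for each $u\in\base$, the implication "$u\subseteq\dom(f)\Rightarrow f[u]\in\base$" cuts out the set
\[
(\Gamma(X)\setminus C_u)\;\cup\;\bigcup_{v\in\base}E(u;v),
\]
since $f\in E(u;v)$ captures exactly the conjunction "$u\subseteq\dom(f)$, $v\subseteq\im(f)$ and $f[u]=v$". Each $E(u;v)$ is $G_\delta$ by Proposition \ref{equality}, so this set is Borel. An analogous decomposition handles the condition on $f^{-1}[v]$, and intersecting over the countable family $\base$ gives
\[
\Gamma(X,\base)=\bigcap_{u\in\base}\Bigl[(\Gamma(X)\setminus C_u)\cup\bigcup_{v\in\base}E(u;v)\Bigr]\cap\bigcap_{v\in\base}\Bigl[(\Gamma(X)\setminus D_v)\cup\bigcup_{u\in\base}E(u;v)\Bigr],
\]
a countable intersection of countable unions of Borel sets, hence Borel.

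Finally, for $H(X,\base)$ I would note that $H(X)=\DD^{-1}(\{\emptyset\})\cap\II^{-1}(\{\emptyset\})$ is closed in $\Gamma(X)$, because $\{\emptyset\}=CL(X)\setminus X^-$ is Fell-closed and both $\DD,\II$ are continuous; hence $H(X,\base)=H(X)\cap\Gamma(X,\base)$ is Borel as the intersection of a closed set with a Borel set. I do not anticipate any substantive obstacle in this argument: the only ingredients are Proposition \ref{equality}, the continuity of $\DD$ and $\II$, and the countability of $\base$, and once these are in hand the proof reduces to bookkeeping the Borel complexity of the defining clauses.
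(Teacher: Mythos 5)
Your proof is correct and follows essentially the same route as the paper: both express $\Gamma(X,\base)$ as a countable Boolean combination of the $G_\delta$ sets $E(u;v)$ from Proposition \ref{equality} and then observe that $H(X,\base)$ is the intersection of $\Gamma(X,\base)$ with the Borel (in your case closed, in the paper's case $G_\delta$) set $H(X)$. If anything, your version is slightly more careful than the paper's, whose displayed equivalence quantifies over all $U\in\base$ without restricting to $U\subseteq\dom(f)$ and thus omits the disjunct ``$u\not\subseteq\dom(f)$'' that you correctly supply via the closed sets $C_u$ and $D_v$.
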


\proof
It follows from Proposition \ref{equality} and the following   equivalence
\[
f\in \Gamma(X,\base)\;\Longleftrightarrow \; (\forall U,U'\in \base)\;(\exists V,V'\in \base)  \; (f\in E(U;V)\;\&\; f^{-1}\in E(U'; V') ).
\]
For $H(X,\base)$ just observe that $\dom(f)=X$ iff $f\in \langle \overline{U};X\rangle$ for all $U\in \base$ and the same for $\im(f)$.
\endproof

For $A\subseteq \base$, let 
\[
\widehat{A}=\bigcup\{u:\; u\in A\}.
\]
To each $f\in S_1(\base)$ we associate a  function  $\widehat{f}:\widehat{\dom(f)}\to \widehat{\im(f)}$ as follows:
\begin{equation}
\label{fgorro}
\widehat{f}(x)=y \;\Longleftrightarrow \; y\in f(u) \; \mbox{for all $u\in \dom(f)$ with $x\in u$}.
\end{equation}
The following lemma shows that $\widehat{f}$ is well defined.

\begin{lema} 
\label{fhat}
Let $X$ be a compact metric space and $\base$ be a countable  base for $X$. Then,  $\widehat{f}\in\Gamma(X, \base)$ for all $f\in S_1(\base)$.  Moreover,  $\widehat{f}[u]=f(u)$ for all $u\in \dom(f)$ and  $\widehat{f}\in H(X,\base)$ if $f\in S_\infty(\base)$.
\end{lema}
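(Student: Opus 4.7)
My plan is to construct $\widehat{f}$ by a nested-neighborhood argument exploiting the $S_1$ condition, and then to read off the topological conclusions. For existence of $\widehat{f}(x)$, given $x\in\widehat{\dom(f)}$, pick $w\in\dom(f)$ with $x\in w$ and use the regularity of the metric space $X$ together with the heredity of $\dom(f)$ to build a sequence $u_n\in\dom(f)$ with $x\in u_n\subseteq w$, $\overline{u_{n+1}}\subseteq u_n$ and $\mathrm{diam}(u_n)\to 0$. The $S_1$ condition yields $\overline{f(u_{n+1})}\subseteq f(u_n)$, so by compactness $\bigcap_n f(u_n)=\bigcap_n\overline{f(u_n)}$ is a non-empty nested intersection. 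Any $y$ in it serves as $\widehat{f}(x)$: for any $u\in\dom(f)$ with $x\in u$, the vanishing diameters force $u_n\subseteq u$ for large $n$, hence $y\in f(u_n)\subseteq f(u)$ by the order-isomorphism property of $f$.

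The delicate step is uniqueness, which I would derive from the following duality: if $y\in f(u)$ for every $u\in\dom(f)$ with $x\in u$, then $x\in f^{-1}(v)$ for every $v\in\im(f)$ with $y\in v$. To prove it, assume $x\notin f^{-1}(v)$ and use regularity to pick a basic $u\in\base$ with $x\in u\subseteq w$ and $u\cap f^{-1}(v)=\emptyset$; heredity of $\dom(f)$ gives $u\in\dom(f)$, and since $f$ is an order isomorphism between hereditary sublattices it preserves finite intersections and sends $\emptyset$ to $\emptyset$, whence $f(u)\cap v=f(u\cap f^{-1}(v))=\emptyset$, contradicting $y\in f(u)\cap v$. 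If two distinct $y\neq y'$ both worked, I would pick disjoint basic $v,v'\in\im(f)$ containing them (obtained by shrinking generic disjoint metric neighborhoods of $y,y'$ inside elements of $\im(f)$ and invoking heredity of $\im(f)$), and the duality would force $x\in f^{-1}(v)\cap f^{-1}(v')=f^{-1}(\emptyset)=\emptyset$, a contradiction.

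Once $\widehat{f}$ is well-defined, the remaining claims follow quickly. The inclusion $\widehat{f}[u]\subseteq f(u)$ for $u\in\dom(f)$ is immediate from the definition, and applying the same construction to $f^{-1}\in S_1(\base)$ and identifying $\widehat{f^{-1}}$ with $(\widehat{f})^{-1}$ via the duality yields the reverse inclusion and bijectivity. Continuity is automatic: for any basic $v\in\im(f)$ with $\widehat{f}(x_0)\in v$, the duality places $x_0$ in $w=f^{-1}(v)\in\dom(f)$ and $\widehat{f}[w]=f(w)=v$. Thus $\widehat{f}\in\Gamma(X)$, and the identities $\widehat{f}[u]=f(u)\in\base$ and $\widehat{f}^{-1}[v]=f^{-1}(v)\in\base$ on the elements of $\dom(f)$ and $\im(f)$ give $\widehat{f}\in\Gamma(X,\base)$. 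When $f\in S_\infty(\base)$ one has $\dom(f)=\im(f)=\base$, so $\widehat{\dom(f)}=\widehat{\im(f)}=X$ and $\widehat{f}$ becomes a total homeomorphism of $X$ sending each basic open to a basic open, i.e.\ $\widehat{f}\in H(X,\base)$. The principal obstacle is the uniqueness step, which simultaneously requires the $S_1$ condition (to keep the shrinking intersections $\bigcap_n\overline{f(u_n)}$ inside each $f(u_n)$), the heredity of both $\dom(f)$ and $\im(f)$ (to produce disjoint basic neighborhoods on both sides), and the fact that order isomorphisms of hereditary lattices preserve finite intersections and the bottom element $\emptyset$.
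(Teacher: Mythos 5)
Your existence argument is fine, but the uniqueness step --- and with it the ``duality'' that the rest of your proof leans on --- has a genuine gap. To prove that $y\in f(u)$ for all $u\in\dom(f)$ with $x\in u$ implies $x\in f^{-1}(v)$ whenever $y\in v\in\im(f)$, you assume $x\notin f^{-1}(v)$ and ``use regularity to pick a basic $u\in\base$ with $x\in u$ and $u\cap f^{-1}(v)=\emptyset$.'' Regularity separates a point from a \emph{closed} set, but $f^{-1}(v)$ is open; if $x$ lies on the boundary of $f^{-1}(v)$ (e.g.\ $X=[0,1]$, $f^{-1}(v)=(0,1/2)$, $x=1/2$), no such $u$ exists, and your argument only yields $x\in\overline{f^{-1}(v)}$. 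The same obstruction defeats your uniqueness argument directly: from $y\in f(u)\cap v=f(u\cap f^{-1}(v))$ for all $u\ni x$ you get only that $x$ is in the closures of $f^{-1}(v)$ and $f^{-1}(v')$, and a point can perfectly well lie on the common boundary of two disjoint open sets. Your construction shrinks the diameters of the $u_n$ but exerts no control on the diameters of the images $f(u_n)$, so $\bigcap_n\overline{f(u_n)}$ could a priori be a large compact set.

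This is exactly the point where the paper's proof does more work: at each stage it picks $v$ with $x\in v\subseteq\overline v\subseteq u_k$, covers the compact set $\overline{f(v)}\subseteq f(u_k)$ by finitely many basic sets $w_1,\dots,w_m$ of diameter at most $1/(k+1)$ with $\overline{w_i}\subseteq f(u_k)$, uses preservation of finite unions to pull the cover back to $\dom(f)$, and chooses $u_{k+1}$ inside the piece $f^{-1}(w_l)$ containing $x$. This forces $\mathrm{diam}(f(u_k))\to 0$, so $\bigcap_k f(u_k)$ is a single point; uniqueness follows at once, and your duality (hence continuity, openness and bijectivity, which you correctly reduce to it) then drops out, since the nested compacta $\overline{f(u_k)}$ shrinking to $\{y\}\subseteq v$ must eventually lie in $v$, giving $x\in u_k\subseteq f^{-1}(v)$. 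So the architecture of your proof is sound, but you must replace the flawed separation step by this finite-cover refinement of the sequence $(u_k)$.
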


\proof
 
Let $f\in S_1(\base)$ and $V=\bigcup\{u: u\in \dom(f)\}$ and $W=\bigcup\{w: w\in \im(f)\}$. We will show that $\widehat{f}:V\to W$. 
Fix $x\in u_0\in dom(f)$. We are going to construct a sequence $\{u_k\in dom(f): k\in \mathbb{N}\}$ satisfying the following conditions for each $k\in \mathbb{N}$:

\begin{itemize}
\item $x\in u_k\subseteq  u_0$ and $diam(u_k)\leq 1/(k+1)$,
    
\item $u_{k+1}\subseteq u_k$,
    
\item $\overline{f(u_{k+1})}\subseteq f(u_k)$ with $diam(\overline{f(u_{k+1})})\leq 1/(k+1)$,
\end{itemize}

Suppose that we have built the first $u_1,...,u_k$ and we show how to construct $u_{k+1}$.

Let $v\in \base$ be such that $x\in v\subseteq \overline{v}\subseteq u_k$.  Since $\overline{f(v)}$ is a compact subset of $f(u_k)$, pick  $w_1,\cdots, w_m$ in $\base$ such that $\overline{w_i}\subseteq f(u_k)$,  $diam(w_i)\leq 1/(k+1)$ and $\overline{f(v)}\subseteq w_{1}\cup ... \cup w_{m}$.
Since $f$ respects finite unions, $x\in v\subseteq f^{-1}(w_{1})\cup\cdots \cup f^{-1}(w_{m})$. Then, there is $l$ such that  $x\in f^{-1}(w_l)$. Pick $u_{k+1}\in \base$ such that $x\in u_{k+1}\subseteq f^{-1}(w_l)$ and $diam(u_{k+1})\leq 1/(k+1)$. Notice that $f^{-1}(w_i)\subseteq u_k$, thus $u_{k+1}\subseteq u_k$. Clearly $\overline{f(u_{k+1})}\subseteq \overline{w_i}\subseteq f(u_k)$. Hence $u_{k+1}$ satisfies the required conditions.

By the compactness of $X$ and the previous construction, we have that for each $x\in V$ there is a unique $y\in f(u_0)\subseteq W$ such that 
\begin{equation}
\label{fgorro2}
\{y\}=\displaystyle\bigcap_{k\in \mathbb{N}} f(u_k)=\bigcap_{k\in \mathbb{N}} \overline{f(u_k)}.
\end{equation}
Since the diameter of the $u_k$ and $f(u_k)$ goes to $0$, it is easy to verify that  $\widehat{f}(x)=y$. 
To see that $\widehat{f}$ is injective it suffices to note that if $u,v\in \dom(f)$ are disjoint, then so are $f(u)$ and $f(v)$.

Now we show that $\widehat{f}$ is continuous. Notice that every open subset of $W$ is a union of sets in $\im(f)$, to show the continuity of $\widehat{f}$ it suffices to verify that $(\widehat{f\,})^{-1}(w)={f}^{-1}(w)$ for all $w\in \im(f)$.  To see that $f^{-1}(w)\subseteq (\widehat{f\,})^{-1}(w)$, pick $x\in f^{-1}(w)$. As $f^{-1}(w)\in \base$,  from \eqref{fgorro} we immediately get that $x\in (\widehat{f\,})^{-1}(w)$. 
Conversely, let $x\in (\widehat{f\;})^{-1}(w)$. Then $\widehat{f}(x)\in w$. By the definition of $\widehat{f}$ (and the proof of \eqref{fgorro2}) it is easy to see that there is $u\in \dom(f)$ such that $x\in u$ and $f(u)\subseteq w$.  Since $f\in S_1(\base)$, $u\subseteq f^{-1}(w)$ and thus $x\in f^{-1}(w)$.

Now we show that $\widehat{f}$ is open. 
As before, it suffices to show that  $\widehat{f}[u]= f(u)$ for all $u\in\dom(f)$. Let $u\in\dom(f)$,  it is clear from  \eqref{fgorro} that $\widehat{f}[u]\subseteq f(u)$. Conversely,  let $y\in f(u)$. As in the proof of \eqref{fgorro2}, we see that there is $x\in V$ such that $x\in f^{-1}(w)$ for all $w\in \im(f)$ with $y\in w$. In particular $x\in u$ and $\widehat{f}(x)=y$. This shows that $\widehat{f}$ is open and, moreover,  that $\widehat{f}$ is surjective. Therefore, we have that $\widehat{f}\in \Gamma(X,\base)$. 
\endproof

\begin{lema}
\label{saturacion}
Let $X$ be a compact metric space and $\base_0$ be a countable base for $X$. For each $h\in \Gamma(X)$ there is a base $\base_1\supseteq \base_0$ and $f\in S_1(\base_1)$ such that $\widehat{f}=h$. In particular, if  $D$ is  a countable subset of $\Gamma(X)$, there is a countable  base $\base_1\supseteq \base_0$  such that $D\subseteq \Gamma(X,\base)$.

\end{lema}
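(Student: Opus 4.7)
The plan is to build $\base_1$ by iteratively enlarging $\base_0$ so that it becomes closed under finite unions, finite intersections, and the image maps $u \mapsto h[u]$ (for $u \in \base_1$ with $u \subseteq \dom(h)$) and $v \mapsto h^{-1}[v]$ (for $v \in \base_1$ with $v \subseteq \im(h)$). Concretely, set $\base^{(0)} = \base_0$ and let $\base^{(n+1)}$ be the closure under finite unions and intersections of $\base^{(n)}$ together with all sets $h[u]$ for $u \in \base^{(n)}$, $u \subseteq \dom(h)$, and all sets $h^{-1}[v]$ for $v \in \base^{(n)}$, $v \subseteq \im(h)$. Put $\base_1 = \bigcup_n \base^{(n)}$; this is countable, contains $\emptyset$, extends $\base_0$ (so remains a base for $X$), and is closed under the required operations.

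Next, define $f$ by $\dom(f) = \{u \in \base_1 : u \subseteq \dom(h)\}$ and $f(u) = h[u]$, so that $\im(f) = \{v \in \base_1 : v \subseteq \im(h)\}$ and $f^{-1}(v) = h^{-1}[v]$. Both $\dom(f)$ and $\im(f)$ belong to $\mathcal{L}$: hereditarity and closure under finite unions and intersections are inherited from $\base_1$ together with the openness of $\dom(h)$ and $\im(h)$. Since $h$ is a bijective order-preserving map between $\dom(h)$ and $\im(h)$, $f$ is an order isomorphism, so $f \in S(\base_1)$.

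To see $f \in S_1(\base_1)$, the biconditional $\overline{v} \subseteq u \in \dom(f) \Leftrightarrow \overline{f(v)} \subseteq f(u)$ must be verified. The key point is that when $\overline{v} \subseteq u \subseteq \dom(h)$, the set $\overline{v}$ is compact inside the open set $\dom(h)$; continuity of $h$ on $\dom(h)$ and of $h^{-1}$ on $\im(h)$ then gives $h[\overline{v}] = \overline{h[v]}$. This compact-image identity is the one topological fact doing the work, and I expect it to be the main point to verify, though it is standard. The converse direction follows by symmetry, applied to $h^{-1}$ on $\im(h)$. The identity $\widehat{f} = h$ is then immediate from \eqref{fgorro}: for $x \in \dom(h)$ and any $u \in \dom(f)$ with $x \in u$, one has $h(x) \in h[u] = f(u)$, so $\widehat{f}(x) = h(x)$; and $\widehat{\dom(f)} = \dom(h)$ because $\base_1$ is a base for $X$.

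For the \emph{in particular} clause, enumerate $D = \{h_n : n \in \N\}$ and perform the analogous diagonal construction: at stage $n+1$, adjoin $h_k[u]$ and $h_k^{-1}[v]$ for all $k \leq n$ and all appropriate $u, v \in \base^{(n)}$, then close under finite unions and intersections. For any $h_n \in D$ and any $u \in \base_1$ with $u \subseteq \dom(h_n)$, $u$ lies in some $\base^{(m)}$, whence $h_n[u] \in \base^{(\max(m,n)+1)} \subseteq \base_1$; symmetrically for $h_n^{-1}$. Hence $D \subseteq \Gamma(X, \base_1)$.
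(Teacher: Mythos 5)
Your proposal is correct and follows essentially the same route as the paper: saturate $\base_0$ under $h$, $h^{-1}$, finite unions and intersections, define $f(u)=h[u]$ on $\widetilde{\dom(h)}$, and use the compact-image identity $h[\overline{v}]=\overline{h[v]}$ to get $f\in S_1(\base_1)$, then diagonalize for a countable $D$. You are in fact somewhat more careful than the paper's own proof, which only checks one direction of the $S_1$ biconditional and asserts $\widehat{f}=h$ without the verification you supply.
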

\begin{proof} 
Let $\base_1$ be the saturation of $\base_0$ under the function $h$, that is, the smallest collection of open sets $\base$  such that 
\begin{itemize}
    \item[(i)] $\base_0\subseteq \base$.

\item[(ii)] $h[u], h^{-1}[u]\in \base$ for every $u\in \base$. 
\end{itemize}
Let $V=\dom(h)$ and $W=\im(h)$. Let  $f:\widetilde{V}\to \widetilde{W}$ given by $f(u)=h[u]$ for all $u\in\base_1$ with $u\in \widetilde{V}$.  Notice that if $\overline{v}\subseteq u\in\widetilde{V}$, then $h[\overline{v}]=\overline{h[v]}$ as $h$ is a partial homeomorphism. Therefore $\overline{f(v)}=\overline{h[v]}=h_n[\overline{v}]\subseteq h[u]=f(u)$. Thus $f\in S_1(\base_1)$.  Clearly, $\widetilde{f}=h$. 
This construction can be done analogously with a countable collection $(h_n)_n$ of elements of $\Gamma(X)$. 
\end{proof}

\begin{thm}
\label{copy-S1}
Let $X$ be a compact metric space  and $\base$ be a countable  base for $X$. Then,   $\varphi:S_1(\base)\to \Gamma(X,\base)$ given by $\varphi(f)=\widehat{f}$ is a continuous surjective  homomorphism (as inverse semigroups).
\end{thm}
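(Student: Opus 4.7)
The plan is to verify in turn that $\varphi$ is surjective, a semigroup homomorphism, and continuous, relying throughout on the formula $\widehat{f}[u]=f(u)$ from Lemma \ref{fhat}.

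For surjectivity, given $h\in\Gamma(X,\base)$ I would define $f\in I(\base)$ by $f(u)=h[u]$ for $u\in\base$ with $u\subseteq\dom(h)$; this is well posed because $h\in\Gamma(X,\base)$. Then $\dom(f)=\widetilde{\dom(h)}$ and $\im(f)=\widetilde{\im(h)}$ lie in $\mathcal{C}\subseteq\mathcal{L}$, and $f$ is an order isomorphism because $h$ is a bijective homeomorphism, so $f\in S(\base)$. To check the $S_1$ condition, if $\overline{v}\subseteq u\in\dom(f)$ then $h[\overline{v}]$ is a compact (hence closed in $X$) subset of $h[u]$ containing $h[v]$, so $\overline{f(v)}=\overline{h[v]}\subseteq h[\overline{v}]\subseteq f(u)$; the converse is the same argument applied to $h^{-1}$. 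Finally $\widehat{f}=h$ is immediate from \eqref{fgorro}, since $h(x)$ lies in $h[u]=f(u)$ for every $u\in\dom(f)$ with $x\in u$. For the homomorphism property, Lemma \ref{fhat} gives $(\widehat{f}\circ\widehat{g})[u]=\widehat{f}[g(u)]=f(g(u))=(f\circ g)(u)$ for each $u\in\dom(f\circ g)$, and a short unpacking (using that $\widehat{g}$ is a homeomorphism between $\widehat{\dom(g)}$ and $\widehat{\im(g)}$ and the hereditariness of $\dom(f)$ and $\im(g)$ to refine intersections into $\base$) shows $\dom(\widehat{f}\circ\widehat{g})=\widehat{\dom(f\circ g)}$. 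Hence $\widehat{f\circ g}=\widehat{f}\circ\widehat{g}$.

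For continuity I check each subbasic open set of $\tau_{hco}$. For $\langle K;V\rangle$: given $f$ with $\widehat{f}\in\langle K;V\rangle$, compactness of $K$ and continuity of $\widehat{f}$ produce finitely many $u_1,\dots,u_m\in\dom(f)$ with $K\subseteq\bigcup u_i$ and $f(u_i)\subseteq V$; then $u=\bigcup u_i\in\dom(f)$, and the set $v(u,f(u))\cap S_1(\base)$ is a $\tau_{pp}$-neighborhood of $f$ mapping into $\langle K;V\rangle$. The cases $\langle K;V\rangle^{-1}$ and $\II^{-1}(V^-)$ reduce to the other two via continuity of inversion in $I(\base)$ and the identity $(\widehat{f})^{-1}=\widehat{f^{-1}}$.

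The main obstacle is $\DD^{-1}(V^-)$, because hereditariness alone does not force a fixed basic set to lie in $\dom(g)$ when $V\subseteq\widehat{\dom(g)}$; compactness must be brought in. The key lemma, which I would prove first, is this: for $g\in S_1(\base)$ and $u_0\in\base$ with $\overline{u_0}\subseteq V$, if $V\subseteq\widehat{\dom(g)}$ then $u_0\in\dom(g)$. Indeed, the open cover $\{v:v\in\dom(g)\}$ of the compact set $\overline{u_0}$ admits a finite subcover, whose union lies in $\dom(g)$ by closure under finite unions, and hereditariness of $\dom(g)\in\mathcal{L}$ gives $u_0\in\dom(g)$. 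Given $f$ with $V\not\subseteq\widehat{\dom(f)}$, I pick $x\in V\setminus\widehat{\dom(f)}$ and use the regularity of the compact metric space $X$ together with the base property of $\base$ to find $u_0\in\base$ with $x\in u_0\subseteq\overline{u_0}\subseteq V$; then $u_0\notin\dom(f)$, and by the contrapositive of the key lemma $w_1(u_0)\cap S_1(\base)$ is a $\tau_{pp}$-neighborhood of $f$ contained in $\varphi^{-1}(\DD^{-1}(V^-))$, completing the proof of continuity.
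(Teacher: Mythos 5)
Your proposal is correct and follows essentially the same route as the paper's proof: the same compactness-plus-hereditariness argument handles $\DD^{-1}(V^-)$ via the neighborhood $w_1(u_0)\cap S_1(\base)$, the same finite-cover argument handles $\langle K;V\rangle$, and surjectivity and the homomorphism property are verified just as in the paper. The only differences are cosmetic: you use a single $v(u,f(u))$ with $u$ the finite union (licit since $\dom(f)$ is closed under finite unions and $f$ preserves them) where the paper intersects the sets $v(u_i,f(u_i))$, and you reduce $\langle K;V\rangle^{-1}$ and $\II^{-1}(V^-)$ explicitly through continuity of inversion where the paper invokes a symmetrical argument.
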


\begin{proof}
To see that $\varphi$ is continuous, let $K$ be   compact subset of $X$, $V$ an open subset of $X$ and $f\in \varphi^{-1}(\left<K;V\right>)$. Since $\widehat{f}$ is continuous and $\widehat{f}[K]\subseteq V$ is compact,  there are open sets $u_1, \cdots, u_n\in \dom(f)$ such that  $K\subseteq u_1\cup\cdots\cup u_n$ and $\widehat{f}[u_i]\subseteq V$ for $1\leq i\leq n$.  Thus, $\displaystyle f\in v(u_{1},f(u_{1}))\cap\cdots\cap v(u_{n},f(u_{n}))\cap S_1\subseteq \varphi^{-1}(\left<K;V\right>)$. Hence,  $\varphi^{-1}(\left<K;V\right>)$ is open. By a symmetrical argument $\varphi^{-1}(\left<K;V\right>^{-1})$ is also open.

Now, let $O$ be open in $X$ and $f\in \varphi^{-1}(D^{-1}(O^-))$. Then, there exists $x\in (X\setminus\dom(\widehat{f}))\cap O$, thus let $u\in \base$ such that $x\in u\subseteq \overline{u}\subseteq O$. Hence,  $u\not\in \dom(f)$, that is,  $f\in w_1(u)$. Let us see that $w_1(u)\cap S_1(\base)\subseteq \varphi^{-1}(D^{-1}(O^-))$. Let $g\in w_1(u)\cap S_1(\base)$. Suppose that $g\not\in \varphi^{-1}(D^{-1}(O^-))$, then $u\subseteq \overline{u}\subseteq O\subseteq \dom(\widehat{g})$. Since $\overline{u}$ is compact, there are $u_1,\cdots, u_n\in \dom(g)$ such that $u\subseteq \overline{u}\subseteq u_{1}\cup\cdots\cup u_{n}$. Since $\dom(g)\in \mathcal{L}$, it is closed under finite unions and thus $u_{1}\cup\cdots\cup u_{n}\in \dom(g)$. Then $u\in \dom(g)$, which is a contradiction.

To check that $\varphi$ is an homomorphism of semigroups,  let $f,g\in S(\base)$. We first  show that $\dom(\widehat{f\circ g})=\dom(\widehat{f}\circ \widehat{g})$. Let $x\in \dom(\widehat{f\circ g})$, then there exists $u\in \dom(f\circ g)$ such that $x\in u$. Then, $x\in u\in \dom(g)$ and $\widehat{g}(x)\in g(u)\in \dom(f)$, that implies that $x\in \dom(\widehat{f}\circ\widehat{g})$. Conversely, let $x\in \dom(\widehat{f}\circ \widehat{g})$, then there exists $u\in \dom(g)$ such that $x\in u$ and $v\in \dom(f)$ such that $\widehat{g}(x)\in v$. Note that $\widehat{g}(x)\in g(u)\cap v\in \dom(f)\cap \im(g)$ (as $\dom(f)$ and $\im(g)$ are hereditary).  Define $w=g^{-1}(g(u)\cap v)$, then we have that $w\in \dom(g)$, $g(w)\in \dom(f)$ and $x\in w$, therefore $x\in \dom(\widehat{f\circ g})$.
    
Now we show that $(\widehat{f\circ g})(x) =(\widehat{f}\circ \widehat{g})(x)$ for all $x\in \dom(\widehat{f\circ g})$. Since $x\in \dom(\widehat{g})$, there exists $u\in \dom(g)$ such that $x\in u$. Also,  there is $v\in \dom(f)$ such that $\widehat{g}(x)\in v$.  Consider $w=g^{-1}(g(u)\cap v)$, then $x\in w\in \dom(f\circ g)$ and by the definition of $\widehat{f\circ g}$, we have that  $(\widehat{f\circ g})(x)\in f(g(w))\subseteq f(v)$. Therefore, $(\widehat{f\circ g})(x)=\widehat{f}(\widehat{g}(x))$.
    
From \eqref{fgorro}, it follows that $\varphi(f^{-1})=(\varphi(f))^{-1}$.  
Finally, it remains to show that $\varphi$ is surjective. Let $h\in \Gamma(X,\base)$, $h:V\to W$. Let $\widetilde{V}=\{u\in \base: \; u\subseteq V\}$ and define $\widetilde{W}$ analogously. Let  $f:\widetilde{V}\longrightarrow \widetilde{W}$ by $f(u)=h[u]$ for each $u\subseteq \dom(h)$ with $u\in \base$.  Since $h$ is an partial homeomorphism,  $f\in S_1(\base)$. From the definition \eqref{fgorro} of $\widehat{f}$ it follows easily that  $\widehat{f}=h$.
\end{proof}

\begin{rem}
In general,  the function $\varphi$ given in Theorem \ref{copy-S1} is not injective. Consider $X=[0,1]$ with the usual basis $\base$ of intervals with rationals endpoints. Let  $V_n=(1/n,1)$ for $n\geq 2$, $A=\bigcup_n \widehat{V}_n$,   $f_1=1_A$, $B=\widehat{(0,1)}$ and  $f_2=1_B$. Observe que $A\neq B$, but $\widehat{f_1}=\widehat{f_2}=1_{(0,1)}$.
\end{rem}

Now we are ready to show one of the main result of the paper.

\begin{thm}
\label{0dim}
Let $X$ be a compact metric space. 
If $X$ is 0-dimensional and $\base$ is a basis for $X$ of clopen sets, then $\Gamma(X)$ is topologically  isomorphic to $S(\base)$ and therefore $\Gamma(X)$ is Polish. 
\end{thm}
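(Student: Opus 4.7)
The plan is to upgrade the continuous surjective homomorphism $\varphi : S_1(\base) \to \Gamma(X,\base)$ of Theorem \ref{copy-S1} to a topological isomorphism $S(\base) \to \Gamma(X)$. Two reductions collapse the hypotheses in this special setting: by Lemma \ref{C=L} we have $S_1(\base) = S(\base)$, and I claim $\Gamma(X,\base) = \Gamma(X)$. Indeed, any clopen subset $U\subseteq X$ is compact, hence a finite union of elements of $\base$ inside $U$, and closure of $\base$ under finite unions forces $U\in\base$; so for any $h\in\Gamma(X)$ and $u\in\base$ with $u\subseteq \dom(h)$, the image $h[u]$ is open (as a homeomorphic image of the open set $u$) and compact (as the image of the compact set $u$), hence clopen, hence in $\base$. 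The symmetric statement for $h^{-1}$ gives $h\in\Gamma(X,\base)$. Thus $\varphi$ is a continuous surjective homomorphism $S(\base)\to\Gamma(X)$.

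For injectivity, I would suppose $\widehat{f_1}=\widehat{f_2}$. Then $\bigcup\dom(f_1) = \bigcup\dom(f_2)$, and since $\mathcal{L}=\mathcal{C}$ by Lemma \ref{C=L}, each $\dom(f_i)$ must equal the collection of basic sets contained in this common union; hence $\dom(f_1)=\dom(f_2)$, and $f_1=f_2$ then follows from $f_i(u) = \widehat{f_i}[u]$ (Lemma \ref{fhat}). Note that the remark following Theorem \ref{copy-S1} shows this step genuinely needs $\mathcal{L}=\mathcal{C}$.

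To show $\varphi^{-1}$ is continuous, since $\varphi$ is now a bijection it suffices to verify that the images under $\varphi$ of the subbasic opens of $S(\base) \subseteq I(\base)$ are $\hco$-open in $\Gamma(X)$. For $v(u,w) \cap S(\base)$, the condition $u\in\dom(f)$ and $f(u)=w$ translates via Lemma \ref{fhat} to $u\subseteq\dom(\widehat{f})$ and $\widehat{f}[u]=w$; compactness of $u,w$ identifies the image as $\langle u;w\rangle \cap \langle w;u\rangle^{-1}$, which is open. The case $w_1(u) \cap S(\base)$ is the crux: I would argue the equivalence $u\notin \dom(f) \iff u\not\subseteq \dom(\widehat{f})$, where the nontrivial direction uses compactness of $u$ to extract a finite cover $u_1,\ldots,u_n\in\dom(f)$, and then closure of $\dom(f)\in\mathcal{L}$ under finite unions together with heredity to conclude $u\in\dom(f)$. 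Thus this image is $\DD^{-1}(u^-)$, which is open in $\hco$; symmetrically, $w_2(w) \cap S(\base)$ maps to $\II^{-1}(w^-)$.

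The main obstacle is this openness verification, particularly the $w_1(u)$ case, where the 0-dimensional clopen-base hypothesis enters in full force, both through compactness of basic sets and through the reduction of pointwise domain-failure to combinatorial non-membership in a lattice. Once $\varphi$ is shown to be a continuous bijective open homomorphism, it is a topological isomorphism of inverse semigroups, and Polishness of $\Gamma(X)$ follows from $S(\base)$ being Polish by Lemma \ref{closedsemigroups}(iii).
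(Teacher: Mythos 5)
Your proposal is correct and follows essentially the same route as the paper: reduce to $S_1(\base)=S(\base)$ via Lemma \ref{C=L}, invoke Theorem \ref{copy-S1} for continuity and the homomorphism property, get injectivity from $\mathcal{L}=\mathcal{C}$, and compute the images of the subbasic sets $v(o,p)$, $w_1(o)$, $w_2(o)$ as $\langle o;p\rangle\cap\langle p;o\rangle^{-1}$, $\DD^{-1}(o^-)$, $\II^{-1}(o^-)$ to obtain openness. Your explicit observation that every clopen set lies in $\base$ (hence $\Gamma(X,\base)=\Gamma(X)$, giving surjectivity directly from Theorem \ref{copy-S1}) is precisely what the paper uses implicitly when it re-derives surjectivity by hand.
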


\begin{proof}
By Lemma \ref{C=L}, $S_1(\base)=S(\base)$ as the sets in $\base$ are clopen.
By 
Let $\varphi: S(\base)\rightarrow \Gamma(X)$ be given by $\varphi(f)=\widehat{f}$, then $\varphi$ is continuous inverse semigroup homomorphism by Theorem \ref{copy-S1}.

Let us see that $\varphi$ is injective. Let $f,g\in S(\base)$ with $f\neq g$. There are two cases to be considered: (a) there is $u\in\dom(f)\setminus\dom(g)$.
Then, as $u$ is clopen and $\dom(g)\in \mathcal{L}$,  $u\not\subseteq \dom(\widehat{g})$ and therefore $\widehat{f}\neq \widehat{g}$. (b) $\dom(f)=\dom(g)$ and there is $u\in\dom(f)$ such that $f(u)\neq g(u)$. Then, clearly   $\widehat{f}\neq \widehat{g}$.

Let us see that $\varphi$ is surjective. Let $h\in \Gamma(X)$, $h:V\to W$. Let $\widetilde{V}=\{u\in \base: \; u\subseteq V\}$ and define $\widetilde{W}$ analogously. Let  $f:\widetilde{V}\longrightarrow \widetilde{W}$ by $f(u)=h[u]$ for each clopen $u\subseteq \dom(h)$. It is easy to see that $f\in S(\base)$ and $\widehat{f}=h$.
    
Finally, let us see that $\varphi$ is open. Let  $o,p\in \mathcal{B}$, as they are clopen sets, the basic open set   $\left<o;p\right>$ is well defined. 
Let $f\in \mathcal{S}(\base)$, thus $f\in \mathcal{S}_1 (\base)$. Let $V$ be open such that $\dom(f)=\widetilde{V}$. Then (i) $f\in v(o;p)$ iff $\widehat{f}[o]=p$. (ii) $f\in w_1(o)$ iff $o\not\subseteq V=\dom(\widehat{f})$ iff ${\sf D}(\widehat{f})\in o^{-}$ and analogously (iii) $f\in w_2(o)$ iff ${\sf I}(\widehat{f})\in o^{-}$.
From these facts and the surjectivity of $\varphi$, we have that:
\begin{itemize}
\item $\varphi(v(o,p)\cap S(\base))=\left<o;p\right>\cap \left<p;o\right>^{-1} $.
    
\item $\varphi( w_1(o)\cap S(\base))=\DD^{-1}(o^{-})$.
    
\item $\varphi(w_2(o)\cap S(\base))=\II^{-1}(o^{-})$.
\end{itemize}
\end{proof}

\begin{rem}
The previous theorem is an equivalence:
Consider $\psi:X\to \Gamma(X)$ given by $\psi(x)=1_{X\setminus \{x\}}$. Then $\psi $ an embedding. Thus, if $\Gamma(X)$ is isomorphic to a subsemigroup of $I( \N)$, then $X$ is necessarily 0-dimensional.
\end{rem}

\subsection{Some subsemigroups of $\Gamma(X)$}

Consider
\[
\Gamma_d(X)=\{f\in \Gamma(X):\; \dom(f), \im(f)\;\mbox{are dense sets}\}.
\]

\begin{thm}
Let $X$ be a compact metric space.  Then, $\Gamma_d(X)$ is a $G_\delta$ inverse subsemigroup of $\Gamma(X)$. In particular, if $X$ is 0-dimensional, $\Gamma_d(X)$ is a Polish inverse semigroup.
\end{thm}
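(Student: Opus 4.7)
The plan has two halves: showing that $\Gamma_d(X)$ is an inverse subsemigroup of $\Gamma(X)$, and showing that it is $G_\delta$. The ``in particular'' clause then follows because, by Theorem \ref{0dim}, $(\Gamma(X),\hco)$ is Polish when $X$ is 0-dimensional, and every $G_\delta$ subspace of a Polish space is Polish.

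Closure under inversion is immediate from $\dom(f^{-1}) = \im(f)$ and $\im(f^{-1}) = \dom(f)$. For composition, assume $f,g\in \Gamma_d(X)$. Then $\dom(f)\cap \im(g)$ is the intersection of two open dense sets of $X$, hence open and dense, and $\dom(f\circ g)= g^{-1}(\dom(f)\cap \im(g))$ is open because $g:\dom(g)\to \im(g)$ is a homeomorphism. For density, pick any nonempty open $U\subseteq X$: density of $\dom(g)$ ensures that $g[U\cap \dom(g)]$ is a nonempty open subset of $\im(g)$, hence of $X$, and so meets the dense open set $\dom(f)\cap \im(g)$ at some point $y$; then $g^{-1}(y)\in U\cap \dom(f\circ g)$. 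A symmetric argument handles $\im(f\circ g)=f[\dom(f)\cap \im(g)]$.

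For the $G_\delta$ claim I route the argument through the continuous maps $\DD,\II:\Gamma(X)\to CL(X)$. It suffices to show that $\mathcal{N}=\{A\in CL(X):A \text{ has empty interior}\}$ is $G_\delta$ in the Fell topology, since then $\Gamma_d(X)=\DD^{-1}(\mathcal{N})\cap \II^{-1}(\mathcal{N})$. Fix a countable base $\{U_n\}$ of nonempty open subsets of $X$ and a countable dense set $D\subseteq X$. For any closed $A$, density of $D\cap U_n$ in $U_n$ gives $U_n\subseteq A$ iff $D\cap U_n\subseteq A$. Since $\{x\}$ is compact, $(X\setminus\{x\})^+$ is a Fell subbasic open set, so $\{A:x\in A\}$ is Fell-closed, and
$$\{A\in CL(X):U_n\subseteq A\}=\bigcap_{x\in D\cap U_n}\{A:x\in A\}$$
is Fell-closed. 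Hence $\mathcal{N}=\bigcap_n\{A:U_n\not\subseteq A\}$ is $G_\delta$, as required.

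The only mild pitfall I anticipate is the Fell-topology bookkeeping: one must remember that the subbasic Fell opens $V^+$ require $V$ to have compact complement, which is exactly what legitimises using $(X\setminus\{x\})^+$. The composition argument is routine once one tracks that $g$, being a homeomorphism between open subsets, carries open sets to open sets, and the final implication (Polishness from $G_\delta$ plus Theorem \ref{0dim}) is standard descriptive set theory.
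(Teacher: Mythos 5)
Your proof is correct and follows essentially the same route as the paper: both write $\Gamma_d(X)$ as $\DD^{-1}$ and $\II^{-1}$ of the (Fell-$G_\delta$) collection of closed sets with empty interior, i.e.\ the nowhere dense closed sets, and then invoke Theorem \ref{0dim} together with the fact that $G_\delta$ subspaces of Polish spaces are Polish. You merely supply details the paper leaves implicit, namely the verification that this collection is $G_\delta$ in the Fell topology and the check that $\Gamma_d(X)$ is an inverse subsemigroup; both of your arguments are sound.
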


\begin{proof}
It is known and easy to verify  that $NWD=\{K\in CL(X):\; K\; \mbox{is nowhere dense}\}$ is $G_\delta$. Now observe that 
$\Gamma_d(X)= {\sf D}^{-1}(NWD)\;\cap \;{\sf I}^{-1}(NWD)$. The last claim follows from Theorem \ref{0dim} and the classical result that every $G_\delta$ subset of a Polish space is also Polish (see, for instance, \cite[Theorem 3.11]{kechris1995}).
\end{proof}

We recall that a measurable space $(Y, \mathcal{A})$, where $\mathcal{A}$ is a $\sigma$-algebra on $Y$ is called {\em standard Borel space} if $(Y, \mathcal{A})$  is isomorphic to a Polish space with its  $\sigma$-algebra of Borel sets (see \cite[\S 12]{kechris1995}). A metric space $Y$ is called standard Borel if $Y$ together with its $\sigma$-algebra of Borel sets is standard Borel.
If $Y$ is a Polish space and $Z\subseteq Y$ is a Borel set, then $Z$, as a metric space, is standard Borel.  We know that $\Gamma(X,\base)$ is a Borel subset of $\Gamma(X)$, however, we do not know if $\Gamma(X)$ is Polish. Nevertheless, we have the following fact. 

\begin{thm}
Let $X$ be a compact metric space and $\base$ be a countable base for $X$. Then,  $\Gamma(X,\base)$ is  standard Borel.
\end{thm}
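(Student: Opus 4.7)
The plan is to construct a Borel isomorphism between $\Gamma(X,\base)$ and the Borel subset $S_1(\base)\cap \widetilde{S}(\base)$ of the Polish inverse semigroup $(I(\base),\tau_{pp})$. Since every Borel subset of a Polish space is standard Borel, this will suffice. Note that this route is unavoidable: $\Gamma(X)$ is only known to be separable metrizable, so a direct appeal to the fact that Borel subsets of $\Gamma(X)$ are standard Borel is not available.

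First, I would define $\Psi\colon \Gamma(X,\base)\to I(\base)$ by $\Psi(h)=f_h$, where $f_h\colon \widetilde{\dom(h)}\to \widetilde{\im(h)}$ sends each $u$ to $h[u]$. Well-definedness as an element of $I(\base)$ follows from the definition of $\Gamma(X,\base)$. I would then verify that $f_h\in S_1(\base)\cap \widetilde{S}(\base)$: the order-isomorphism property and the fact that $\dom(f_h),\im(f_h)\in \mathcal{C}$ are immediate from the construction, and the complete-map condition follows from $\bigcup_n u_n\in \widetilde{\dom(h)}$ iff $\bigcup_n u_n\subseteq \dom(h)$ iff $\bigcup_n h[u_n]\subseteq \im(h)$ iff $\bigcup_n h[u_n]\in \widetilde{\im(h)}$. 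For the $S_1$ condition, the decisive fact is that when $\overline{v}\subseteq u\subseteq \dom(h)$, compactness of $\overline{v}$ and $h$ being a homeomorphism on $\dom(h)$ yield $h[\overline{v}]=\overline{h[v]}$; the biconditional $\overline{v}\subseteq u\Leftrightarrow \overline{h[v]}\subseteq h[u]$ then follows by applying this identity to both $h$ and $h^{-1}$.

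Next, I would show that $\Psi$ is a bijection onto $S_1(\base)\cap \widetilde{S}(\base)$ whose inverse is the restriction of $\varphi$ from Theorem \ref{copy-S1}. Lemma \ref{fhat} gives $\widehat{f_h}[u]=f_h(u)=h[u]$ for all $u\in \widetilde{\dom(h)}$, and since this basis covers $\dom(h)$, we get $\widehat{f_h}=h$, i.e., $\varphi\circ \Psi=\mathrm{id}$. Conversely, given $f\in S_1(\base)\cap \widetilde{S}(\base)$ with $\dom(f)=\widetilde{V}$, Lemma \ref{fhat} yields $\dom(\widehat{f})=V$, so $f_{\widehat{f}}$ has domain $\widetilde{V}=\dom(f)$ and agrees with $f$ there by $\widehat{f}[u]=f(u)$; hence $\Psi\circ \varphi=\mathrm{id}$.

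The remaining step is to verify that $\Psi$ is Borel, which I would do by checking the preimages of the subbasic sets of $\tau_{pp}$. Indeed, $\Psi^{-1}(v(u,v))=E(u,v)$ is $G_\delta$ by Proposition \ref{equality}; and $\Psi^{-1}(w_1(u))=\{h:u\not\subseteq \dom(h)\}$, $\Psi^{-1}(w_2(u))=\{h:u\not\subseteq \im(h)\}$ are complements of $G_\delta$ sets, since $\{h:u\subseteq \dom(h)\}=\bigcap_n \langle K_n;X\rangle$ for any exhaustion of $u$ by compacts $K_n\subseteq u$ (and analogously for $\im$). Together with the continuity (hence Borelness) of $\varphi$ from Theorem \ref{copy-S1}, this makes $\Psi$ a Borel isomorphism onto the Borel subset $S_1(\base)\cap \widetilde{S}(\base)$ of $I(\base)$ (cf.\ Lemmas \ref{closedsemigroups} and \ref{S1-closed}), and the conclusion follows. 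I expect the main obstacle to be the careful verification that $\Psi$ actually lands in $S_1(\base)$ and, dually, that every $f\in S_1(\base)\cap \widetilde{S}(\base)$ is in its image; the topological-semigroup identities for $\widehat{f}$ in Lemma \ref{fhat} should make both directions mechanical once the $S_1$ condition is handled.
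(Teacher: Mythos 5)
Your proposal is correct and follows essentially the same route as the paper: both identify $\Gamma(X,\base)$ with the Borel set $\widetilde{S}_1(\base)=S_1(\base)\cap\widetilde{S}(\base)$ inside the Polish semigroup $I(\base)$ via the correspondence $h\mapsto (u\mapsto h[u])$ inverse to $\varphi$, and both verify Borelness of the identification on subbasic sets using $E(o;p)$ from Proposition \ref{equality} (you compute preimages under $\Psi$, the paper computes images under $\varphi\restriction\widetilde{S}_1(\base)$ — the same sets). Your remark that one cannot simply quote ``Borel in $\Gamma(X)$ implies standard Borel'' is exactly the point the paper's argument is designed around; the only nitpick is that your one-line check of the complete-map condition should also account for the requirement that $\bigcup_n u_n$ belong to $\base$, which is where the hypothesis $h\in\Gamma(X,\base)$ (applied to $h$ and $h^{-1}$) is actually used.
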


\proof 
Let $\widetilde{S}_1(\base) =S_1(\base)\cap \widetilde{S}(\base)$.  From Lemma \ref{closedsemigroups} and \ref{S1-closed},  $\widetilde{S}_1(\base)$ is a $F_{\sigma\delta}$ subset of $S(\base)$. We claim that  $\varphi({S}_1(\base))=\varphi(\widetilde{S}_1(\base))$.  In fact, let $f\in S_1$ and  $V=\bigcup \dom(f)$ and $W=\bigcup \im(f)$. Let $f_1:\widetilde{V}\to \widetilde{W}$ given by $f_1(u)=\widehat{f}[u]$. Then $\varphi(f)=\varphi(f_1)$.

Let $\psi: \widetilde{S}_1(\base)\to \Gamma(X,\base)$ be the restriction of $\varphi$ to $\widetilde{S}_1(\base)$. Then $\psi$  is a continuous bijection (by the proof of Theorem \ref{copy-S1}). By Lemma \ref{GammaX-Borel}, $\Gamma(X,\base)$ is a Borel set  in $\Gamma(X)$. So, it remains to verify that $\psi$  is a Borel isomorphism, that is, that it sends basic open sets  into Borel sets.  As in the proof of Theorem \ref{0dim}, let $o,p\in\base$. Recall the sets $E(o;p)=\{f\in\Gamma(X): \; f[u]=p\}$ defined in Proposition \ref{equality}. Then

\begin{itemize}
\item $\varphi(v(o,p)\cap \widetilde{S}_1(\base)) = E(o;p)\cap \Gamma(X,\base)$.
    
\item $\varphi(w_1(o)\cap \widetilde{S}_1(\base))=\DD^{-1}(o^{-})\cap \Gamma(X,\base)$.
    
\item $\varphi( w_2(o)\cap \widetilde{S}_1(\base))=\II^{-1}(o^{-})\cap \Gamma(X,\base)$.
\end{itemize}
By Proposition \ref{equality},  $E(o;p)$ is Borel in $\Gamma(X)$ and thus we are done. 
\endproof

The main question we have left open is the following.

\begin{question}
Is $(\Gamma(X), \tau_{hco})$ Polish for every compact metric space $X$?
\end{question}

\noindent{\bf Acknowledgment.} We thank Edwar Ram\'irez  for some discussion about metrics on $\Gamma(X)$.

\bibliographystyle{plain}

\end{document}